\theoremstyle{plain}
\newtheorem{thm}{Theorem}[section]
\newtheorem{lm}[thm]{Lemma}
\newtheorem{cor}[thm]{Corollary}
\newtheorem{prop}[thm]{Proposition}
\theoremstyle{definition}
\newtheorem{rmk}[thm]{Remark}
\newtheorem{example}[thm]{Example}
\theoremstyle{definition}
\newcommand{\R}{\mathbb R}
\newcommand{\Z}{\mathbb Z}
\newcommand{\N}{\mathbb N}
\newcommand{\C}{\mathcal C}
\newcommand{\bnu}{\begin{enumerate}}
\newcommand{\enu}{\end{enumerate}}
\newcommand{\wh}{\widehat}
\newcommand{\bpf}{\begin{proof}}
\newcommand{\epf}{\end{proof}}
\newcommand{\card}{\operatorname{card}}
\numberwithin{equation}{section}
\begin{document}

\author{Lenka Slav\'ikov\'a}

\address{Department of Mathematical Analysis, Faculty of Mathematics and Physics, Charles University,
	Sokolovsk\'a 83, 186 75 Praha 8, Czech Republic}
\address{Mathematical Institute, University of Bonn, Endenicher Allee 60, 53115 Bonn, Germany}
\email{slavikova@karlin.mff.cuni.cz}

\subjclass[2010]{42B15, 42B25, 46E35}
\keywords{Bilinear Fourier multipliers, Sobolev spaces, Gagliardo-Nirenberg inequalities}

\title[Bilinear Fourier multipliers]{Bilinear Fourier multipliers and the rate of decay of their derivatives}

\begin{abstract}
We investigate two types of boundedness criteria for bilinear Fourier multiplier operators with symbols with bounded partial derivatives of all (or sufficiently many) orders. Theorems of the first type explicitly prescribe only a certain rate of decay of the symbol itself while theorems of the second type require, in addition, the same rate of decay of all derivatives of the symbol. We show that even though these two types of bilinear multiplier theorems are closely related, there are some fundamental differences between them which arise in limiting cases. Also, since theorems of the latter type have so far been studied mainly in connection with the more general class of bilinear pseudodifferential operators, we revisit them in the special case of bilinear Fourier multipliers, providing also some improvements of the existing results in this setting.
\end{abstract}

\maketitle

\section{Introduction and overview of the results}\label{S:introduction}

Assume that $\sigma$ is a bounded function on $\R^{n}$. We denote by $S_\sigma$ the linear multiplier operator defined as
$$
S_\sigma f(x)=\int_{\R^n} \sigma(\xi) \wh{f}(\xi) e^{2\pi i x\cdot \xi}\,d\xi, \quad x\in \R^n,
$$
for any Schwartz function $f$ on $\R^n$. Here, $\wh{f}(\xi)=\int_{\R^n} f(x)e^{-2\pi i x\cdot \xi} \,dx$ stands for the Fourier transform of the function $f$. One of the key questions about the operator $S_\sigma$ is how it acts on different function spaces. Related to this, it is a well-known consequence of the Plancherel identity that $S_\sigma$ admits a bounded extension from $L^2(\R^n)$ to $L^2(\R^n)$. Conversely, for a general bounded function $\sigma$, the operator $S_\sigma$ does not need to be bounded on $L^p(\R^n)$ if $p\neq 2$.  

In connection with various problems involving product-type operations, the bilinear variant of the operator $S_\sigma$ also comes into play. For a given bounded function $m$ on $\R^{2n}$, we define the bilinear multiplier operator $T_m$ as
\begin{equation}\label{E:bilinear_multiplier}
T_m(f,g)(x)=\int_{\R^{n}} \int_{\R^n} m(\xi,\eta) \wh{f}(\xi) \wh{g}(\eta) e^{2\pi i x \cdot (\xi+\eta)}\,d\xi d\eta, \quad x\in \R^n,
\end{equation}
where $f$ and $g$ are Schwartz functions on $\R^n$. In this paper we focus on the study of the $L^2(\R^n) \times L^2(\R^n) \to L^1(\R^n)$ boundedness of the operator $T_m$. While this is, in a sense, a bilinear analogue of the $L^2$-boundedness of the linear operator $S_\sigma$, it is not true that $T_m$ is bounded from $L^2(\R^n) \times L^2(\R^n)$ to $L^1(\R^n)$ for every bounded function $m$. In fact, it was shown by B\'enyi and Torres~\cite{BT} that there is a bounded function $m$ with bounded partial derivatives of all orders such that the associated operator $T_m$ is unbounded from $L^2(\R^n) \times L^2(\R^n)$ to $L^1(\R^n)$. Thus, stronger conditions on the decay of the function $m$ and/or its derivatives than merely their membership to the space $L^\infty(\R^{2n})$ need to be required in order to guarantee the above-mentioned boundedness.

A result of Grafakos, He and Honz\'ik~\cite{GHH} asserts that if a bounded function $m$ has bounded partial derivatives of all orders and is, in addition, square integrable, then $T_m$ is bounded from $L^2(\R^n) \times L^2(\R^n)$ to $L^1(\R^n)$. In a subsequent paper by Grafakos, He and the author~\cite{GHS}, the assumption $m\in L^2(\R^{2n})$ was relaxed to assuming that $m\in L^q(\R^{2n})$ for some $q<4$. It was also pointed out that the same theorem fails if $q>4$; the problem of the validity of the result in the limiting case $q=4$ was however left open. One of the goals of the present paper is to answer this question (see Theorem~\ref{T:q=4} below). 

\medskip
Let us now recall the classical Gagliardo-Nirenberg interpolation inequality, obtained independently by Gagliardo~\cite{G} and Nirenberg~\cite{N}. It asserts, in particular, that if $f$ is a function in $L^{p_1}(\R^{2n})$ whose partial derivatives of an integer order $k$ belong to $L^{p_2}(\R^{2n})$ and $l$ is a positive integer less than $k$, then all partial derivatives of $f$ of order $l$ belong to $L^p(\R^{2n})$, where $p$ is given by
$$
\frac{1}{p}=\frac{1-\theta}{p_1} + \frac{\theta}{p_2}, \quad \theta=\frac{l}{k}.
$$
This inequality was subsequently studied and extended to more general contexts by various authors, see, e.g.,~\cite{BM,BM2,C, CDDD,FFRS,FFRS2}. 

The Gagliardo-Nirenberg interpolation inequality implies that if $m$ is a function in $L^q(\R^{2n})$ which has bounded partial derivatives of all orders, then all partial derivatives of $m$ belong to $L^{r}(\R^{2n})$ for any $r>q$. Therefore, whenever we assume that $m$ is a function on $\R^{2n}$ with bounded partial derivatives of all orders such that $m$ belongs to $L^q(\R^{2n})$ for some $q<4$, then we in fact implicitly assume that all partial derivatives of $m$ belong to $L^r(\R^{2n})$ for some $r<4$. This shows a close relationship between the multiplier theorems from~\cite{GHH, GHS} and a different type of multiplier theorems where a certain rate of decay is prescribed not only for the symbol itself, but also for its derivatives. The latter criteria have been studied mainly in connection with the more general class of pseudodifferential operators (that is, operators of the form~\eqref{E:bilinear_multiplier} with $m$ depending on $x$, $\xi$ and $\eta$). Let us now provide a brief overview of some of these results.


A result by Miyachi and Tomita~\cite{MT}, adapted to the particular case of bilinear multiplier operators, asserts that if the condition
\begin{equation}\label{E:mt}
|\partial_{\xi}^\alpha \partial_{\eta}^{\beta} m(\xi,\eta)| \leq C_{\alpha,\beta} (1+|\xi|+|\eta|)^{-\frac{n}{2}}
\end{equation}
holds for all multiindices $\alpha$, $\beta\in \N_0^n$, then the operator $T_m$ is bounded from $L^2(\R^n) \times L^2(\R^n) \to L^1(\R^n)$. In addition, the same conclusion fails to be true if the power $n/2$ on the right-hand side of~\eqref{E:mt} is replaced by any smaller power.

The previous result was further improved in the recent paper by Kato, Miyachi and Tomita~\cite{KMT}, and in the subsequent paper~\cite{KMT2} by the same authors. They obtained a sufficient condition for the $L^2(\R^n) \times L^2(\R^n) \to L^1(\R^n)$ boundedness of $T_m$ of the form
\begin{equation}\label{E:kmt}
|\partial_{\xi}^\alpha \partial_{\eta}^{\beta} m(\xi,\eta)| \leq C_{\alpha,\beta} V(\xi,\eta),
\end{equation}
where $\alpha$, $\beta \in \N^n_0$ are multiindices satisfying $|\alpha| \leq \lfloor n/2 \rfloor +1$ and $|\beta| \leq \lfloor n/2 \rfloor +1$ and $V$ is a non-negative function on $\R^{2n}$ fulfilling
\begin{equation}\label{E:V}
\card\{(k,l) \in \Z^{n} \times \Z^n:~V(k,l)>\lambda\} \lesssim \lambda^{-4}, \quad \lambda>0,
\end{equation}
and having a ``moderate behavior'' (this means, roughly speaking, that $V(\xi_1,\eta_1)$ is comparable to $V(\xi_2,\eta_2)$ if $(\xi_1,\eta_1)$ is close to $(\xi_2,\eta_2)$; see~\cite[Definition 3.7]{KMT} for a precise definition). The authors also proved a version of this result involving a Besov-type space of smoothness $n/2$ in each of the variables $\xi$ and $\eta$. 


\medskip
The goal of this paper is to investigate the two types of multiplier theorems mentioned above and, in particular, to point out the differences between them that arise in limiting cases. 

We start by revisiting the results of~\cite{KMT}.
Unlike in~\cite{KMT}, where the general case of bilinear pseudodifferential operators was studied, we restrict ourselves to the special case of bilinear multiplier operators and we  obtain some improvements of the previous result in this setting. Namely, we lower the total number of derivatives required in condition~\eqref{E:kmt} from $n+\varepsilon$ to $n/2+\varepsilon$. In addition, we show that the assumption that the function $V$ in~\eqref{E:kmt} has moderate behavior can be omitted, providing that condition~\eqref{E:V} is replaced by
$$
|\{(\xi,\eta) \in \R^{n} \times \R^n:~V(\xi,\eta)>\lambda\}| \lesssim \lambda^{-4}, \quad \lambda>0.
$$
The proof of our result is independent of that in~\cite{KMT}, using the decomposition of the multiplier in terms of product-type wavelets (an approach inspired by the papers~\cite{GHH}, \cite{GHH2}, \cite{GHS}) and combinatorial arguments. 

To formulate our results we need the notion of the fractional Laplace operator, which is defined for any $s>0$ as 
$$
[(I-\Delta)^{\frac{s}{2}} f]^{\wh{ ~}} (\xi) = (1+4\pi^2 |\xi|^2)^{\frac{s}{2}} \wh{f}(\xi), \quad \xi \in \R^{2n}.
$$

Our first main result is the following theorem.

\begin{thm}\label{T:sobolev}
Let $m$ be a function on $\R^{2n}$ 
satisfying
\begin{equation}\label{E:C_s(m)}
C_s(m):=\sup_{\lambda>0} \lambda |\{x\in \R^{2n}:~ |(I-\Delta)^{\frac{s}{2}} m(x)| >\lambda\}|^{\frac{1}{4}} <\infty
\end{equation}
for some $s>n/2$.
Then the associated operator $T_m$ admits a bounded extension from $L^2(\R^n) \times L^2(\R^n)$ to $L^1(\R^n)$ and
$$
\|T_m(f,g)\|_{L^1(\R^n)} \leq C C_s(m) \|f\|_{L^2(\R^n)} \|g\|_{L^2(\R^n)}.
$$
\end{thm}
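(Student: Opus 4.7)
The plan is to expand $m$ in a product-type wavelet basis, exploit the factorization of the bilinear operator on tensor-product symbols into a product of linear multipliers, and then control the resulting double sum by combining orthogonality with a combinatorial level-set analysis of the weak-$L^4$ Sobolev norm.

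I would fix a Daubechies wavelet system $\{\psi^G\}_G$ on $\R^n$ of regularity strictly greater than $s$ and expand
\begin{equation*}
m(\xi,\eta)=\sum_{I,J,G_1,G_2} b_{I,J}^{G_1,G_2}\,\psi^{G_1}_I(\xi)\,\psi^{G_2}_J(\eta),
\end{equation*}
with $I,J$ ranging over dyadic cubes of $\R^n$. The algebraic identity $T_{\psi\otimes\varphi}(f,g)(x)=S_{\psi}f(x)\,S_{\varphi}g(x)$ turns $T_m(f,g)$ into a double sum of products of linear multipliers. Grouping by scale pairs $(j_1,j_2)$ via $|I|=2^{-j_1n}$ and $|J|=2^{-j_2n}$, and applying Cauchy--Schwarz pointwise in $(I,J,G_1,G_2)$ followed by Cauchy--Schwarz in $L^2\times L^2$, I obtain, via Plancherel and the uniform bound $\sum_{I,G_1}|\psi^{G_1}_I(\xi)|^2\lesssim 2^{j_1 n}$ (with the analogous bound in $\eta$),
\begin{equation*}
\|T_{m_{j_1,j_2}}(f,g)\|_{L^1}\lesssim 2^{(j_1+j_2)n/2}\,\|b_{j_1,j_2}\|_{\ell^2}\,\|f\|_{L^2}\|g\|_{L^2},
\end{equation*}
where $m_{j_1,j_2}$ denotes the wavelet block at scale $(j_1,j_2)$ and $\|b_{j_1,j_2}\|_{\ell^2}^2$ is the sum of $|b_{I,J}^{G_1,G_2}|^2$ over the corresponding indices. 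Summing in $(j_1,j_2)\in\N_0^2$ reduces the theorem to establishing
\begin{equation*}
\sum_{j_1,j_2\ge 0} 2^{(j_1+j_2)n/2}\,\|b_{j_1,j_2}\|_{\ell^2}\lesssim C_s(m).
\end{equation*}

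To convert the weak-$L^4$ Sobolev hypothesis into this inequality, I would use that $\psi^{G_1}_I\otimes\psi^{G_2}_J$ has Fourier support in a bi-annulus on which the Bessel symbol $(1+4\pi^2|(x,y)|^2)^{s/2}$ is comparable to $(2^{j_1}+2^{j_2})^s$. Consequently the Sobolev-normalized coefficients $\tilde b_R:=(2^{j_1}+2^{j_2})^s\,b_R$ are essentially the wavelet coefficients of $(I-\Delta)^{s/2}m$, and a product Littlewood--Paley characterization gives that the discrete square function $S:=\big(\sum_R|\tilde b_R|^2\,2^{(j_1+j_2)n}\,\chi_{I\times J}\big)^{1/2}$ satisfies $\|S\|_{L^{4,\infty}(\R^{2n})}\lesssim C_s(m)$. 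The remaining inequality thus takes the shape
\begin{equation*}
\sum_{j_1,j_2\ge 0} (2^{j_1}+2^{j_2})^{-s}\,\|\tilde b_{j_1,j_2}\|_{\ell^2}\lesssim\|S\|_{L^{4,\infty}}.
\end{equation*}

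The main obstacle lies precisely in extracting this last inequality from a weak-$L^4$ distribution bound on $S$; this is the combinatorial heart of the proof. I would proceed by stratifying the contributions according to the size of $\tilde b_R\,2^{(j_1+j_2)n/2}$, splitting them into dyadic ranges $[2^\nu,2^{\nu+1})$, and using $|\{S>\lambda\}|\lesssim\lambda^{-4}\,C_s(m)^4$ to count how many wavelet indices at each scale pair $(j_1,j_2)$ can occur in each level. The delicate point is that $j_1$ and $j_2$ are genuinely independent, so the level sets distribute across scale pairs in an anisotropic fashion, and the factor $(2^{j_1}+2^{j_2})^{-s}$ delivers summability precisely because $s>n/2$. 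The critical exponent $s=n/2$ is expected to be sharp, in line with the classical Miyachi--Tomita threshold.
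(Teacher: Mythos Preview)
Your reduction has a genuine gap at the Cauchy--Schwarz step. The inequality you aim to establish,
\[
\sum_{j_1,j_2\ge 0} 2^{(j_1+j_2)n/2}\,\|b_{j_1,j_2}\|_{\ell^2}\;\lesssim\; C_s(m),
\]
is simply false. To see this, work at the single scale $(j_1,j_2)=(0,0)$ and take $N$ coefficients all equal to $1$, supported on $N$ disjoint unit cubes. Then the square function $S$ satisfies $\|S\|_{L^{4,\infty}}\sim N^{1/4}$, while $\|\tilde b_{0,0}\|_{\ell^2}=N^{1/2}$; since the scale prefactor is $1$, your left-hand side is at least $N^{1/2}$ and the right-hand side is of order $N^{1/4}$. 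More generally, the level-set stratification you propose gives $N_\nu\lesssim 2^{-4\nu}C_s(m)^4$ for the number of coefficients at level $2^\nu$, and then $\sum_\nu N_\nu 2^{2\nu}$ is a divergent series as $\nu\to-\infty$. The point is that $\ell^{4,\infty}$ does not embed into $\ell^2$, so no amount of combinatorics on level sets can produce the full $\ell^2$ bound you need after applying Cauchy--Schwarz jointly in $(I,J)$.

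The paper's argument avoids this loss precisely by \emph{not} taking a global Cauchy--Schwarz in $(I,J)$. Its key device is a combinatorial lemma: any sequence $a=\{a_{k,l}\}\in\ell^{4,\infty}(\Z^n\times\Z^n)$ admits a splitting of the index set into $S_1\cup S_2$ so that the \emph{row} $\ell^2$-norms $(\sum_{l:(k,l)\in S_1}|a_{k,l}|^2)^{1/2}$ and the \emph{column} $\ell^2$-norms $(\sum_{k:(k,l)\in S_2}|a_{k,l}|^2)^{1/2}$ are each bounded by $C\|a\|_{\ell^{4,\infty}}$. With this splitting in hand one writes, say on $S_1$, the bilinear sum as $\sum_l \mathcal F^{-1}(\omega_{2,l}\wh g)\cdot\mathcal F^{-1}\big(\sum_{k:(k,l)\in S_1}a_{k,l}\,\omega_{1,k}\wh f\big)$, applies Cauchy--Schwarz only in $l$, and uses the row bound together with Plancherel-type almost-orthogonality to get $\|a\|_{\ell^{4,\infty}}$ rather than $\|a\|_{\ell^2}$. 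The paper also organizes the scales isotropically (single-parameter Littlewood--Paley on $\R^{2n}$, passing through a Besov space $B$ built on $L^{4,\infty}$), which sidesteps the bi-parameter bookkeeping you introduce; but the decisive missing ingredient in your plan is the row/column splitting lemma, without which the $\ell^{4,\infty}$ hypothesis cannot be converted into the bilinear $L^1$ bound.
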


In addition, we obtain a version of this result involving a Besov-type space of smoothness $n/2$; see Section~\ref{S:multiplier_theorem} for more details. We also point out that it is a consequence of Theorem~\ref{T:sobolev} that if a function $m$ has partial derivatives of order up to $\lfloor n/2 \rfloor +1$ in $L^4(\R^{2n})$ then the associated operator $T_m$ is bounded from $L^2(\R^n) \times L^2(\R^n)$ to $L^1(\R^n)$.

A crucial tool for proving Theorem~\ref{T:sobolev} is an elementary lemma about expressing a given (say non-negative) function defined on $\Z^{2n}=\Z^{n} \times \Z^n$ as a sum of two functions with disjoint supports, the first one having uniformly bounded $\ell^2$-norms over all rows and the second one having uniformly bounded $\ell^2$-norms over all columns. We discuss this problem in Section~\ref{S:lemma} where we show that such a decomposition is indeed possible if the function satisfies the estimate~\eqref{E:V}, and that no condition on the cardinality of level sets of that function weaker than~\eqref{E:V} can guarantee the existence of such a decomposition. 

We now recall that it follows from the results of~\cite{MT} described above that the conclusion of Theorem~\ref{T:sobolev} fails if the power $1/4$ in~\eqref{E:C_s(m)} is replaced by any smaller power. In fact, more is true; namely, \eqref{E:C_s(m)} is the weakest condition on the decay of the measure of level sets of partial derivatives of $m$ that guarantees the $L^2(\R^n) \times L^2(\R^n) \to L^1(\R^n)$ boundedness of the operator $T_m$. This is the content of the next theorem. It shows, in particular, an intimate connection of this problem with the above-mentioned combinatorial question. We note that when optimality of condition~\eqref{E:C_s(m)} is discussed, we only consider the behavior of the function 
$$
\lambda \mapsto |\{x\in \R^{2n}:~ |(I-\Delta)^{\frac{s}{2}} m(x)| >\lambda\}|
$$
for $\lambda$ near zero. This is a natural assumption, considering that a typical function $m$ for which Theorem~\ref{T:sobolev} is of interest has bounded partial derivatives of all (or sufficiently many) orders. 

\begin{thm}\label{T:sharpness}
Assume that $\mu: (0,1) \rightarrow [0,\infty)$ is a non-increasing function satisfying
$$
\sup_{\lambda \in (0,1)} \lambda (\mu(\lambda))^{\frac{1}{4}} = \infty.
$$
Given $s\in 2\N$, there is a smooth function $m$ on $\R^{2n}$ with bounded partial derivatives of all orders which fulfills
$$
|\{x\in \R^{2n}:~ |(I-\Delta)^{\frac{s}{2}} m(x)| >\lambda\}| \leq \mu(\lambda), \quad \lambda \in (0,1)
$$
and for which the associated operator $T_m$ is unbounded from $L^2(\R^n) \times L^2(\R^n)$ to $L^1(\R^n)$.
\end{thm}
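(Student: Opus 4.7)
The plan is to build the counterexample $m$ as an infinite sum of well-separated ``bad blocks'', each realizing the failure of boundedness at one of the scales $\lambda_k$ extracted from the hypothesis. The assumption $s\in 2\N$ will be exploited crucially: $(I-\Delta)^{s/2}$ is then a differential operator, hence local, so the level sets of $(I-\Delta)^{s/2}m$ decompose as the disjoint union of the corresponding level sets of the individual blocks.

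First I would extract a decreasing sequence $\lambda_k\to 0$ in $(0,1)$ together with integers $N_k:=\lceil\mu(\lambda_k)\rceil$ such that $\lambda_k N_k^{1/4}\to\infty$, $N_{k+1}\geq 2N_k$, and $\lambda_{k+1}\leq\lambda_k/4$; such a sequence exists because the supremum in the hypothesis is infinite and $\mu$ is non-increasing. The geometric growth of $N_k$ is what will later make the level-set sum telescopic.

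The main technical ingredient is a family of smooth, compactly supported building blocks $m^{(N)}\colon\R^{2n}\to\C$, one for each integer $N$, satisfying: (i) $|\operatorname{supp} m^{(N)}|\leq C_1 N$; (ii) $\|\partial^\alpha m^{(N)}\|_\infty\leq C_\alpha$ uniformly in $N$ for every multi-index $\alpha$; and (iii) $\|T_{m^{(N)}}\|_{L^2\times L^2\to L^1}\geq c_0 N^{1/4}$. Such a family is essentially what underlies the sharpness of the $L^q$-based multiplier theorem at $q=4$ discussed in~\cite{GHS}: roughly $N$ unit-sized bumps are placed at frequency locations $(\xi_i,\eta_j)$ on a product lattice, with $\pm 1$-coefficients chosen so that the bilinear form $(x,y)\mapsto\sum\epsilon_{ij}x_iy_j$ has operator norm comparable to $N^{1/4}$, and the sums $\xi_i+\eta_j$ are arranged so that this spectral gain survives the passage from the symbol to the operator $T_{m^{(N)}}$. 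I expect the transfer of the $N^{1/4}$ gain from the symbol-side bilinear form to a genuine lower bound on $\|T_{m^{(N)}}\|_{L^2\times L^2\to L^1}$ — as opposed to being smothered by the almost-orthogonality that $|\check\phi|^2$ enforces between well-separated frequency pairs — to be the main technical obstacle, analogous to the combinatorial heart of \cite{MT,GHS}.

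Given such blocks, I would translate the $k$-th block to a point $(a_k,b_k)\in\R^{2n}$ chosen so widely spread that the translated supports are pairwise disjoint and the modulation frequencies $\xi+\eta$ lying in distinct translated supports are well-separated, and set $m(\xi,\eta):=\sum_k\lambda_k\,m^{(N_k)}(\xi-a_k,\eta-b_k)$. Uniform bounds on derivatives plus disjointness give $m\in C^\infty$ with all partial derivatives bounded. Locality of $(I-\Delta)^{s/2}$ yields the level-set decomposition
\[
\{|(I-\Delta)^{s/2}m|>\lambda\}=\bigsqcup_{k}\{|(I-\Delta)^{s/2}(\lambda_k\,m^{(N_k)}(\cdot-(a_k,b_k)))|>\lambda\},
\]
each summand being empty unless $\lambda_k\gtrsim\lambda$ and otherwise of measure $\leq C_1 N_k$; the geometric doubling of $N_k$ collapses the total to $\lesssim N_{k_*}\leq\mu(\lambda)$ up to an absolute constant, where $k_*$ is the largest $k$ with $\lambda_k\gtrsim\lambda$, and the constant is absorbed by a preliminary rescaling of $\mu$ that preserves the hypothesis $\sup_\lambda\lambda\mu(\lambda)^{1/4}=\infty$. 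Finally, testing $T_m$ against pairs $(f_k,g_k)$ of unit-$L^2$ Schwartz functions whose Fourier supports lie in small neighborhoods of $a_k$ and $b_k$ and that approximately realize~(iii) for the translated $k$-th block, all other blocks are invisible to the bilinear form, whence $\|T_m(f_k,g_k)\|_1\geq\tfrac{c_0}{2}\lambda_k N_k^{1/4}\to\infty$ while $\|f_k\|_2\|g_k\|_2=1$, contradicting $L^2\times L^2\to L^1$ boundedness.
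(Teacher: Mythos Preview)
Your outline is correct and coincides with the paper's strategy (Proposition~\ref{P:bumps} together with the level-set bookkeeping in Section~\ref{S:example}): build $m$ from well-separated lattice bump-blocks, exploit locality of $(I-\Delta)^{s/2}$ for even $s$, and test $T_m$ on Fourier-localized pairs. One correction to your heuristic for the building block~(iii): the $N^{1/4}$ gain does not come from the $\ell^2\times\ell^2$ operator norm of a generic random sign matrix but from the \emph{structured} choice $\epsilon_{j,k}=a_{j+k}(t)$ (Rademacher in $t$) on a product grid $S\times S\subset\Z^n\times\Z^n$ with $(\card S)^2\approx N$, after which Khintchine in $t$ gives $\int_0^1\|T_{m_t}(f,g)\|_1\,dt\gtrsim\big(\sum_{l}\card\{j\in S:l-j\in S\}^2\big)^{1/2}\approx N^{3/4}$ against test functions with $\|f\|_2\|g\|_2\approx N^{1/2}$ --- so the frequency collisions $j+k=l$ are precisely the source of the gain, not an obstacle to be engineered around.
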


Next, we focus on the other type of boundedness criteria, where a certain rate of decay is explicitly prescribed only for the symbol itself, but not for its derivatives. We show that, in contrast to Theorem~\ref{T:sobolev}, this criterion fails in the limiting case $q=4$. 

\begin{thm}\label{T:q=4}
There is a smooth bounded function $m$ on $\R^{2n}$ belonging to the space $L^4(\R^{2n})$ which has bounded partial derivatives of all orders and for which the associated operator $T_m$ is unbounded from $L^2(\R^n) \times L^2(\R^n)$ to $L^1(\R^n)$.
\end{thm}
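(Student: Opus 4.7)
The plan is a superposition argument using disjointly supported building blocks. I would first produce a sequence $\{m_N\}_{N\ge 1}$ of smooth, compactly supported functions on $\R^{2n}$ satisfying $\sup_N\|\partial^\alpha m_N\|_\infty<\infty$ for every multiindex $\alpha$, $\|m_N\|_{L^4(\R^{2n})}\le 1$, and $\|T_{m_N}\|_{L^2\times L^2\to L^1}\to\infty$. Such blocks can be extracted from the $L^q$-counterexamples of~\cite{GHS} in the supercritical range $q>4$: localize each such counterexample by a smooth cutoff in frequency (which does not damage the operator norm, because any fixed pair $(f,g)$ witnessing a large ratio $\|T_m(f,g)\|_{L^1}/(\|f\|_{L^2}\|g\|_{L^2})$ has Fourier supports in a fixed ball, on which a sufficiently wide cutoff equals~$1$), and then renormalize $\|m_N\|_{L^4}=1$.

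Given such blocks, I would choose translations $(\xi_N,\eta_N)\in\R^n\times\R^n$ along a sufficiently sparse sequence so that the translated supports $\mathrm{supp}(m_N)+(\xi_N,\eta_N)$ are pairwise disjoint (with a fixed positive buffer), together with scalars $c_N>0$ satisfying $\sum_N c_N^4<\infty$ and $c_N\|T_{m_N}\|\to\infty$. After passing to a subsequence on which $\|T_{m_N}\|\ge N^2$, the choice $c_N:=\|T_{m_N}\|^{-1/2}$ works, since then $c_N\|T_{m_N}\|=\|T_{m_N}\|^{1/2}\to\infty$ and $\sum c_N^4=\sum\|T_{m_N}\|^{-2}<\infty$. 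Set
$$
m(\xi,\eta):=\sum_{N=1}^\infty c_N\,m_N\bigl(\xi-\xi_N,\eta-\eta_N\bigr).
$$
By disjointness of supports, $m$ is smooth, every partial derivative satisfies $\|\partial^\alpha m\|_\infty\le\sup_N\|\partial^\alpha m_N\|_\infty$, and $\|m\|_{L^4}^4=\sum_N c_N^4\|m_N\|_{L^4}^4<\infty$. To see that $T_m$ is unbounded, test against pairs $(f,g)$ with $\widehat f\otimes\widehat g$ supported in the $N$th support block: only the $N$th term in $m$ contributes, and translation of a symbol in frequency amounts to modulating $f$ and $g$ in physical space (with an overall unimodular phase in the output), which preserves the $L^2\times L^2\to L^1$ operator norm. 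Hence $\|T_m\|\ge c_N\|T_{m_N}\|\to\infty$.

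The main obstacle is the first step: turning the qualitative failure of the multiplier theorem for $q>4$ into a quantitative family with diverging operator norms under $L^4$-normalization while preserving uniform derivative bounds. One has to check that the cutoff preserves a definite portion of the operator norm and that the derivatives of the rescaled $m_N$ stay uniformly bounded as $N\to\infty$. Once these blocks are in hand, the superposition and the verification of the three required properties of $m$ reduce to bookkeeping based on disjointness of supports and the frequency-translation invariance of the bilinear operator norm.
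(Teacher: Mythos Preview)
Your superposition scheme (disjointly supported blocks, frequency translation, summability of $c_N^4$) is sound and in fact matches the architecture of the paper's proof: there too $m=\sum_N F_{t_N,N}$ with pairwise disjoint supports, and one tests against pairs $(f_N,g_N)$ whose Fourier supports see only the $N$th block. The difficulty is entirely in your first step, and your proposed resolution of it does not work.

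What you need is a sequence $m_N$ with uniform bounds on all derivatives, $\|m_N\|_{L^4}\le 1$, and $\|T_{m_N}\|\to\infty$. Note that the existence of such a sequence is \emph{equivalent} to the theorem: given any single $m$ as in the statement, smooth cutoffs at growing scales already furnish the blocks. So your first step is not a reduction; it is a restatement. Your suggestion to manufacture the blocks from a $q>4$ counterexample does not close the gap. Fix such an $m$ (bounded derivatives, $m\in L^q$, $T_m$ unbounded) and cut off at scale $R$. You correctly observe that $\|T_{m_R}\|\to\infty$. But if $m\notin L^4$ then $\|m_R\|_{L^4}\to\infty$ as well, and the only a priori control is H\"older: $\|m_R\|_{L^4}\lesssim R^{2n(\frac14-\frac1q)}$. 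Nothing in the qualitative unboundedness of $T_m$ tells you that $\|T_{m_R}\|$ outpaces this growth, so after renormalizing to $\|m_R\|_{L^4}=1$ you have no lower bound on the operator norm. (And if the $q>4$ counterexample happened to lie in $L^4$ already, you would be assuming the conclusion.)

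The paper supplies exactly the missing quantitative input by building the blocks explicitly. For each even $N$ it places $\sim 2^{2n(N^2+N/2)}$ bumps of width $2^{-N}$ on a lattice, with common amplitude $2^{-nN^2/2}$ and random $\pm$ signs constant along anti-diagonals; Khintchine's inequality then forces $\|T_{F_{t_N,N}}(f_N,g_N)\|_{L^1}\gtrsim 2^{nN/4}$ for an appropriate $t_N$, while $\|F_{t_N,N}\|_{L^4}\lesssim 2^{-nN/4}$. The quadratic exponent in the amplitude is the crucial device that makes the derivative bounds uniform: $\|\partial^\alpha F_{t_N,N}\|_\infty\lesssim 2^{-nN^2/2+N|\alpha|}\le 2^{|\alpha|^2/(2n)}$, independent of $N$. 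This simultaneous control of $L^4$ norm, operator norm, and all derivative bounds is precisely the content you flagged as ``the main obstacle'' and left unresolved; it requires a genuine construction rather than extraction from the supercritical case.
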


We point out that it is unclear what is the weakest condition on the decay of the measure of level sets of the symbol $m$ in order to guarantee the $L^2(\R^n) \times L^2(\R^n) \to L^1(\R^n)$ boundedness of the associated operator $T_m$ under the assumption that $m$ has bounded partial derivatives of all orders; in fact, it is not even clear whether such a condition exists at all. To illustrate this further, we observe in Section~\ref{S:q=4} that the function $m$ constructed in the proof of Theorem~\ref{T:q=4} satisfies the estimate
\begin{equation}\label{E:log}
|\{(\xi,\eta)\in \R^{n}\times \R^n:~ |m(\xi,\eta)| >\lambda\}|\lesssim \lambda^{-4} \log^{-\alpha}(e/\lambda), \quad \lambda \in (0,1),
\end{equation}
for every $\alpha >0$. Thus, no condition of the form~\eqref{E:log} is sufficient in this setting. On the other hand, we have the sufficiency of the condition
$$
|\{(\xi,\eta)\in \R^{n}\times \R^n:~ |m(\xi,\eta)| >\lambda\}|\lesssim \lambda^{-q}, \quad \lambda >0,
$$
for any $q<4$; see~\cite[Theorem 1.3]{GHS}.

Let us finally mention that the class of multipliers covered by Theorem~\ref{T:sobolev} is wider than the one from~\cite[Theorem 1.3]{GHS}. A similar conclusion was established in~\cite{KMT} for symbols with unlimited smoothness. We extend this comparison result to symbols of limited smoothness by making use of the fractional variant of the Gagliardo-Nirenberg interpolation inequality obtained by Brezis and Mironescu~\cite{BM}. More details are given in Section~\ref{S:comparison}, where we also briefly comment on the relationship of Theorem~\ref{T:sobolev} with bilinear multiplier theorems of H\"ormander type and with the multiplier theorem~\cite[Theorem 1.1]{GHS}, which has applications for proving boundedness of rough bilinear singular integral operators.

\medskip

{\it Notation.} Let us now fix notation that will be used throughout the paper. Given $n\in \N$ and $x\in \R^n$, we denote by $|x|$ and $|x|_\infty$ the Euclidean and maximum norm of $x$, respectively. Also, given a real number $y$, $\lfloor y \rfloor$ stands for the integer part of $y$, that is, the largest integer which does not exceed $y$. 

For $p\in [1,\infty)$, we denote by $L^p(\R^n)$ the space of all Lebesgue measurable functions on $\R^n$ whose absolute value is integrable when raised to the power $p$, while $L^\infty(\R^n)$ is the space of all essentially bounded functions on $\R^n$. Variants of these spaces when the Lebesgue measure on $\R^n$ is replaced by the counting measure on some countable set $\C$ are denoted by $\ell^p(\C)$. The Schwartz space of rapidly decreasing functions on $\R^n$ is denoted by $\mathcal S(\R^n)$, and $\mathcal S'(\R^n)$ stands for its dual, the space of tempered distributions. By $\left<m,f\right >$ we mean the action of a temperate distribution $m$ on a function $f\in \mathcal S(\R^n)$. The Fourier transform of a temperate distribution $m$ is denoted by $\wh{m}$, and the inverse Fourier transform of a function $f\in \mathcal S(\R^n)$ is denoted either by $\check{f}$, or by $\mathcal F^{-1}(f)$. 

By writing $``\lesssim``$ we mean that the expression on the left-hand side of $``\lesssim``$  is dominated by the expression on the right-hand side up to multiplicative constants depending only on unessential quantities. The relation $``\approx``$ between two expressions means that they are bounded by each other up to multiplicative constants depending only on unessential quantities.

\section{An elementary lemma}\label{S:lemma}

In this section we prove an elementary combinatorial lemma which will serve as a crucial tool for proving Theorem~\ref{T:sobolev}. 

Let us start by introducing some necessary terminology. For a totally $\sigma$-finite measure space $(R,\nu)$, we define the Lorentz space $L^{4,\infty}(R,\nu)$ as the collection of all $\nu$-measurable functions $f$ on $R$ satisfying 
$$
\|f\|_{L^{4,\infty}(R,\nu)} = \sup_{\lambda >0} \lambda \nu(\{x\in R: |f(x)|>\lambda\})^{\frac{1}{4}}<\infty.
$$
Alternatively, the quantity $\|f\|_{L^{4,\infty}(R,\nu)}$ can be expressed as
$$
\|f\|_{L^{4,\infty}(R,\nu)} = \sup_{t>0} t^{\frac{1}{4}} f^*_\nu(t),
$$
where 
$$
f^*_\nu(t)=\inf\{\rho \geq 0: ~ \nu(\{x\in R: ~|f(x)|>\rho\}) < t\}, \quad t>0,
$$
stands for the non-increasing rearrangement of $f$ with respect to the measure $\nu$. 

In the special case when $(R,\nu)$ is the Euclidean space $\R^{2n}$ equipped with the $2n$-dimensional Lebesgue measure $\lambda_{2n}$, we write $L^{4,\infty}(\R^{2n})$ instead of $L^{4,\infty}(\R^{2n},\lambda_{2n})$ for simplicity. In addition, if $(R,\nu)$ is a countable set $\C$ equipped with the counting measure then the corresponding Lorentz space is denoted by $\ell^{4,\infty}(\C)$. We shall also skip the subscript $\nu$ in the notation for the non-increasing rearrangement of a function if no confusion can arise about the choice of the underlying measure.

\begin{lm}\label{L:decomposition}
Let $\C$ be a countable set, and let $f$ be a function on $\C \times \C=\C^2$ such that $f\in \ell^{4,\infty}(\C^2)$. Then we can write $\C^2$ as a union of two disjoint sets $S_1$ and $S_2$ satisfying
\begin{equation}\label{E:1}
\left(\sum_{l:~(k,l)\in S_1} |f(k,l)|^2\right)^{\frac{1}{2}} \leq C \|f\|_{\ell^{4,\infty}(\C^2)} \quad \text{for every $k\in \C$}
\end{equation}
and
\begin{equation}\label{E:2}
\left(\sum_{k:~(k,l)\in S_2} |f(k,l)|^2\right)^{\frac{1}{2}} \leq C \|f\|_{\ell^{4,\infty}(\C^2)} \quad \text{for every $l\in \C$},
\end{equation}
where $C>0$ is an absolute constant. 
\end{lm}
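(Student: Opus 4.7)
The proof plan is to build the decomposition by a dyadic level-set analysis of $|f|$ combined with a per-level row-heavy/row-light dichotomy. By homogeneity I may assume $\|f\|_{\ell^{4,\infty}(\C^2)}=1$, so that $|\{(k,l):|f(k,l)|>\lambda\}|\leq \lambda^{-4}$ for every $\lambda>0$. I partition the support of $f$ into the dyadic level sets
$$A_j:=\{(k,l)\in \C^2:\,2^{-j-1}<|f(k,l)|\leq 2^{-j}\},\qquad j\in\Z,$$
and observe that the weak estimate yields $|A_j|\leq 2^{4(j+1)}$.

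Within each $A_j$ I would introduce the row threshold $T_j:=2^{2j}$, declare a row $k$ to be \emph{heavy at level $j$} if $|\{l:(k,l)\in A_j\}|>T_j$ and \emph{light} otherwise, and split $A_j=B_j\sqcup D_j$ with $B_j$ the light-row part and $D_j$ the heavy-row part. Because $|A_j|\leq 2^{4(j+1)}$, a counting argument shows that there are at most $|A_j|/T_j\leq 2^{2j+4}$ heavy rows at level $j$, and consequently each column meets $D_j$ in at most $2^{2j+4}$ points. Setting $S_1:=\bigcup_j B_j$ and $S_2:=(\C^2\setminus\mathrm{supp}\,f)\cup\bigcup_j D_j$ produces a disjoint partition of $\C^2$, and at each individual level the construction gives the bounds
$$\sum_{l:(k,l)\in B_j}|f(k,l)|^2\leq T_j\cdot 2^{-2j}=1,\qquad \sum_{k:(k,l)\in D_j}|f(k,l)|^2\leq 16.$$

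The main obstacle is that these per-level estimates are only $O(1)$ per level, and summing them naively over $j$ diverges. The crux of the proof must therefore be to refine the construction so that, for each row $k$, only $O(1)$ levels contribute nontrivially to the row sum in $S_1$, and symmetrically for columns and $S_2$. I expect this to be accomplished by an inductive stopping-time procedure: process the levels in decreasing magnitude, maintain an $\ell^2$-budget of absolute size for each row, and once a row's budget is spent, redirect its remaining light-level entries into $S_2$ (and symmetrically for columns). One then argues via a double-counting of saturation events — row $k$ saturating at level $j$ must be compensated by at least $T_j$ level-$j$ entries in that row, which the global count $|A_j|\leq 2^{4(j+1)}$ constrains.

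The technical heart of the argument is this orchestration of the row and column splits across levels, carefully arranged so that every point of $\C^2$ is accounted for exactly once. It is here that the $\ell^{4,\infty}$ hypothesis is used in its sharp form, giving at the end the desired absolute bounds $(\sum_{l:(k,l)\in S_1}|f(k,l)|^2)^{1/2}\leq C$ uniformly in $k$ and its column analogue uniformly in $l$.
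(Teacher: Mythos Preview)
Your proposal is not yet a proof: you explicitly identify the ``main obstacle'' (summing the per-level $O(1)$ bounds over $j$ diverges) and then only \emph{speculate} about a fix via a stopping-time/budget mechanism, without carrying it out. The suggested fix has a structural problem. Redirecting a saturated row's remaining light entries into $S_2$ dumps uncontrolled material into the column sums; your double-counting heuristic bounds the number of rows that \emph{saturate} at level $j$, but after saturation such a row contributes to $S_2$ at \emph{all} subsequent levels, and there is no a priori column control on those contributions. Invoking ``symmetrically for columns'' only compounds the difficulty, since a column that saturates would then redirect back into $S_1$, and you have not explained why this ping-pong terminates with uniform bounds. In short, the heart of the argument is precisely the part you have left as an expectation.

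The paper's proof avoids level-by-level bookkeeping altogether. It first builds auxiliary sets $\tilde S_1,\tilde S_2$ greedily: in each row, take the largest values of $|f|$ until their $\ell^2$-sum first reaches $2$ (and symmetrically for columns). This guarantees the row/column sums over $\tilde S_1,\tilde S_2$ are at most $3$. The key step is then a \emph{rank} argument: if $R_k=\max_{(k,l)\notin\tilde S_1}|f(k,l)|$ and the $R_k$ are arranged in decreasing order as $R_{\tilde k_1}\geq R_{\tilde k_2}\geq\cdots$, one shows $R_{\tilde k_i}\lesssim i^{-1/2}$ by observing that the $i$ rows $\tilde k_1,\dots,\tilde k_i$ each carry $\ell^2$-mass $\geq 2$ in $\tilde S_1$ at values $\geq R_{\tilde k_i}$, forcing (via the weak-$\ell^4$ bound on $f^*$) a set of size $\gtrsim i^2$ where $|f|\geq R_{\tilde k_i}$. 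With the analogous column quantities $C_{\tilde l_j}\lesssim j^{-1/2}$, one finishes by assigning each leftover point $(\tilde k_i,\tilde l_j)\notin\tilde S_1\cup\tilde S_2$ to $S_1$ if $i\geq j$ and to $S_2$ otherwise; then at most $i$ leftover points land in row $\tilde k_i$ of $S_1$, each of size $\lesssim i^{-1/2}$, giving the required $O(1)$ bound directly.
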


\begin{proof}
We can assume, without loss of generality, that $\|f\|_{\ell^{4,\infty}(\C^2)}=1$. Then 
$$
\card\{(k,l) \in \C^2:~|f(k,l)|>\lambda\} \leq \lambda^{-4}, \quad \lambda>0.
$$
In particular, if $\lambda>1$ then $\card\{(k,l) \in \C^2:~|f(k,l)|>\lambda\} <1$, which implies that the corresponding level sets are empty, and thus $\|f\|_{\ell^{\infty}(\C^2)}\leq 1$. 

We first construct two auxiliary subsets $\tilde{S_1}$ and $\tilde{S_2}$ of $\C^2$ (not necessarily disjoint).
Let us fix $k\in \C$. If $\sum_{l\in \C} |f(k,l)|^2 \leq 2$ then we set $(k,l)\in \tilde{S_1}$ for every $l\in \C$. Conversely, if $\sum_{l\in\C} |f(k,l)|^2 > 2$ then we rearrange those numbers $|f(k,l)|$ which are positive in decreasing order: 
$$
|f(k,l_1)|\geq |f(k,l_2)| \geq |f(k,l_3)| \geq \dots,
$$
where $l_1, l_2, \dots$ is a suitable permutation of (a subset of) $\C$.
Note that such a rearrangement can be constructed since all level sets $\{(k,l) \in \C^2: ~|f(k,l)|>\lambda\}$ with $\lambda>0$ are finite. Then we find $N_k\in \N
$, $N_k\geq 2$ satisfying $\sum_{i=1}^{N_k-1} |f(k,l_i)|^2 <2$ and  $\sum_{i=1}^{N_k} |f(k,l_i)|^2 \geq 2$ (notice that $N_k\geq 2$ because $\|f\|_{\ell^\infty(\C^2)}\leq 1$). Then we set $(k,l_i)\in \tilde{S_1}$ for $i=1,\dots, N_k$, and also $(k,l) \in \tilde{S_1}$ if $|f(k,l)|=0$. We observe that
\begin{equation}\label{E:S_1}
\sum_{l:~(k,l)\in \tilde{S_1}} |f(k,l)|^2
=|f(k,l_{N_k})|^2 + \sum_{i=1}^{N_k-1} |f(k,l_i)|^2 
\leq 1+2=3.
\end{equation}

Let us now fix $l\in \C$. If $\sum_{k\in \C} |f(k,l)|^2 \leq 2$ then we set $(k,l)\in \tilde{S_2}$ for every $k\in \C$. Conversely, if $\sum_{k\in\C} |f(k,l)|^2 >2$ then we rearrange those numbers $|f(k,l)|$ which are positive in decreasing order: 
$$
|f(k_1,l)|\geq |f(k_2,l)| \geq |f(k_3,l)| \geq \dots,
$$
where $k_1, k_2, \dots$ is a suitable permutation of (a subset of) $\C$.
Again, this can be done since all level sets $\{(k,l) \in \C^2: ~|f(k,l)|>\lambda\}$ with $\lambda>0$ are finite. Then we find $N_l\in \N$, $N_l\geq 2$ satisfying $\sum_{i=1}^{N_l-1} |f(k_i,l)|^2 <2$ and  $\sum_{i=1}^{N_l} |f(k_i,l)|^2 \geq 2$ and we set $(k_i,l)\in \tilde{S_2}$ for $i=1,\dots, N_l$, and also $(k,l) \in \tilde{S_2}$ if $|f(k,l)|=0$. As previously, we observe that
\begin{equation*}
\sum_{k:~(k,l)\in \tilde{S_2}} |f(k,l)|^2
=|f(k_{N_l},l)|^2 + \sum_{i=1}^{N_l-1} |f(k_i,l)|^2
\leq 1+2=3.
\end{equation*}

For $k\in \C$ we denote 
$$
R_k=\max_{l \in \C:~(k,l)\notin \tilde{S_1}} |f(k,l)|.
$$
We note that $R_k$ is understood to be $0$ if the set $\{l\in \C:~(k,l)\notin \tilde{S_1}\}$ is empty. We now rearrange those numbers $R_k$ that are positive in decreasing order:
$$
R_{\tilde{k}_1} \geq R_{\tilde{k}_2} \geq \dots,
$$
where $\tilde{k}_1, \tilde{k}_2, \dots$ is a permutation of (a subset of) $\C$, indexed by the elements of a set $I_k \subseteq \N$ (we either have $I_k = \emptyset$, or $I_k=\{1,2,\dots,N\}$ for some $N\in \N$, or $I_k=\N$). Such a rearrangement is possible since all level sets $\{(k,l) \in \C^2: ~|f(k,l)|>\lambda\}$ with $\lambda>0$ are finite. We claim that $R_{\tilde{k}_i} \lesssim 1/\sqrt{i}$ for $i\in I_k$, up to an absolute multiplicative constant. To verify this, we fix $i\in I_k$ and consider the set 
$$
A=\{(\tilde{k}_j, l): ~j\in \{1,\dots, i\}, ~(\tilde{k}_j,l)\in \tilde{S_1}, ~f(\tilde{k}_j,l) \neq 0\}.
$$
Then $|f(k,l)|\geq R_{\tilde{k}_i}$ for every $(k,l)\in A$. 
Using the definition of the set $\tilde{S_1}$, we obtain
$$
\sum_{(k,l)\in A} |f(k,l)|^2 = \sum_{j=1}^i \sum_{l:~(\tilde{k}_j,l) \in \tilde{S_1}} |f(\tilde{k}_j,l)|^2 \geq 2i.
$$
Also, recalling that
$$
\sup_{t>0} t^{\frac{1}{4}} f^*(t) =\|f\|_{\ell^{4,\infty}(\C^2)} =1,
$$ 
where $f^*$ denotes the non-increasing rearrangement of $f$ with respect to the counting measure on $\C^2$, we get
$$
\sum_{(k,l)\in A} |f(k,l)|^2 
\leq \sum_{j=1}^{\card{A}} (f^*(j))^2
\leq \sum_{j=1}^{\card{A}} \frac{1}{\sqrt{j}} \approx \sqrt{\card{A}}.
$$
So, $\card{A} \geq c i^2$, where $c>0$ is an absolute constant, which yields 
$$
R_{\tilde{k}_i} \leq f^*(\card{A}) \leq f^*(ci^2) \lesssim \frac{1}{\sqrt{i}},
$$ 
as desired.

Similarly, for $l\in \C$ we denote 
$$
C_l=\max_{k:~(k,l)\notin \tilde{S_2}} |f(k,l)|
$$
and rearrange those numbers $C_l$ that are positive in decreasing order:
$$
C_{\tilde{l}_1} \geq C_{\tilde{l}_2} \geq \dots,
$$
where $\tilde{l}_1, \tilde{l}_2, \dots$ is a permutation of (a subset of) $\C$, indexed by the elements of a set $J_l\subseteq \N$. An argument as above shows that $C_{\tilde{l}_i} \lesssim 1/\sqrt{i}$ for $i\in J_l$. 

We are now ready to define the sets $S_1$ and $S_2$. We notice that if $(k,l) \notin \tilde{S_1}\cup \tilde{S_2}$ then $R_k>0$ and $C_l>0$, and therefore $k=\tilde{k}_i$ for some $i \in \N$ and $l=\tilde{l}_j$ for some $j\in \N$. Thus, we can set 
$$
S_1=\tilde{S_1} \cup \{(k,l)\notin \tilde{S_1}\cup \tilde{S_2}: ~(k,l)=(\tilde{k}_i, \tilde{l}_j) \text{ for $i\geq j$}\}
$$
and 
$$
S_2=\C^2 \setminus S_1=(\tilde{S_2} \setminus \tilde{S_1}) \cup \{(k,l)\notin \tilde{S_1}\cup \tilde{S_2}: ~(k,l)=(\tilde{k}_i, \tilde{l}_j) \text{ for $i< j$}\}.
$$

It remains to verify inequalities~\eqref{E:1} and~\eqref{E:2}. To show~\eqref{E:1}, we fix $k\in \C$ such that $(k,l) \in S_1 \setminus \tilde{S_1}$ for some $l\in \C$. 
Then $k=\tilde{k}_i$ for some $i \in \N$. Now, if $(k,l)=(\tilde{k}_i, \tilde{l}_j) \in S_1 \setminus \tilde{S_1}$ then $j\leq i$, which  means that $\card\{l\in \C: ~(k,l)\in S_1 \setminus \tilde{S_1}\} \leq i$. We also have
$$
|f(k,l)|\leq R_{\tilde{k_i}} \lesssim \frac{1}{\sqrt{i}} \quad \text{if } (k,l) \in S_1\setminus \tilde{S_1}. 
$$
Thus,
\begin{equation}\label{E:tilde}
\sum_{l:~(k,l)\in S_1 \setminus \tilde{S_1}} |f(k,l)|^2 \lesssim i\cdot \left(\frac{1}{\sqrt{i}}\right)^2 =1.
\end{equation} 
A combination of~\eqref{E:S_1} and~\eqref{E:tilde} yields~\eqref{E:1}. Inequality~\eqref{E:2} can be proved analogously.
\end{proof}

\begin{example}
In this example we show that the assumption $f\in \ell^{4,\infty}(\C^2)$ of Lemma~\ref{L:decomposition} is sharp. For simplicity, we work in the setting where $\C=\Z$ but an analogous argument can be applied to cover the general situation as well.
 
Assume that $f$ is a function on $\Z^2$ satisfying the monotonicity assumption 
\begin{equation}\label{E:monotonicity}
|f(k_1,l_1)| \leq |f(k_2,l_2)| \quad \text{if } \max\{|k_1|,|l_1|\} > \max\{|k_2|,|l_2|\}.
\end{equation}
We show that if there are two disjoint subsets $S_1$ and $S_2$ of $\Z^2$ whose union is the entire $\Z^2$ and if there is a constant $C$ for which
\begin{equation*}
\sum_{l:~(k,l)\in S_1} |f(k,l)|^2 \leq C \quad \text{for every $k\in \Z$}
\end{equation*}
and
\begin{equation*}
\sum_{k:~(k,l)\in S_2} |f(k,l)|^2 \leq C  \quad \text{for every $l\in \Z$},
\end{equation*}
then $f\in \ell^{4,\infty}(\Z^2)$. 

Let us fix $M\in \Z^+_0$. Then we have
\begin{align*}
\sum_{k=-M}^{M} \sum_{l=-M}^M |f(k,l)|^2 
&\leq \sum_{k=-M}^M \sum_{l:~(k,l)\in S_1} |f(k,l)|^2
+\sum_{l=-M}^M \sum_{k:~(k,l)\in S_2} |f(k,l)|^2 \\
&\leq 2C(2M+1).
\end{align*}
Due to the monotonicity assumption~\eqref{E:monotonicity}, $f^*((2M+1)^2) \leq |f(k,l)|$ whenever $\max\{|k|, |l|\} \leq M$. This, combined with the previous inequality, yields
$$
(2M+1)^2 (f^*((2M+1)^2))^2 \leq 2C(2M+1),
$$
or
$$
\sup_{M\in \Z^+_0} (2M+1)^{\frac{1}{2}} f^*((2M+1)^2) <\infty.
$$
Thanks to the monotonicity of the function $f^*$, this implies
$$
\sup_{t\in [1,\infty)} t^{\frac{1}{4}} f^*(t)<\infty.
$$
Since $f$ is bounded, we also trivially have 
$$
\sup_{t\in (0,1)} t^{\frac{1}{4}} f^*(t)<\infty,
$$
which yields that $f\in \ell^{4,\infty}(\Z^2)$, as desired.

Finally, let us mention that if a  function $f\in \ell^{4,\infty}(\Z^2)$ satisfies~\eqref{E:monotonicity} then an example of a decomposition of $\Z^2$ having the properties as in Lemma~\ref{L:decomposition} is
$$
S_1=\{(k,l) \in \Z^2: ~|k| \geq |l|\} \quad \text{and} \quad S_2=\{(k,l) \in \Z^2: ~|k|<|l|\}.
$$
\end{example}

\section{Proof of Theorem~\ref{T:sobolev}}\label{S:multiplier_theorem}


In this section we apply Lemma~\ref{L:decomposition} to prove Theorem~\ref{T:sobolev}. We start by introducing some relevant function spaces. Given $1<r<\infty$ and $s>0$, the fractional Sobolev space $L^r_s(\R^{2n})$ is the space of all functions $f$ on $\R^{2n}$ for which
$$
\|f\|_{L^r_s(\R^{2n})}=\|(I-\Delta)^{\frac{s}{2}}f\|_{L^r(\R^{2n})} <\infty. 
$$
Further, the fractional Lorentz-Sobolev space $L^{4,\infty}_s(\R^{2n})$, where $s>0$, is defined as the collection of all functions $f$ on $\R^{2n}$ for which
$$
\|f\|_{L^{4,\infty}_s(\R^{2n})} = \|(I-\Delta)^{\frac{s}{2}} f\|_{L^{4,\infty}(\R^{2n})} <\infty.
$$
It is worth noticing that a function $m$ on $\R^{2n}$ satisfies the assumption~\eqref{E:C_s(m)} if and only if $m \in L^{4,\infty}_s(\R^{2n})$, and the constant $C_s(m)$ in~\eqref{E:C_s(m)} is equal to $\|m\|_{L^{4,\infty}_s(\R^{2n})}$. In addition, since the space $L^4_s(\R^{2n})$ is continuously embedded into $L^{4,\infty}_s(\R^{2n})$, Theorem~\ref{T:sobolev} yields as a corollary that the operator $T_m$ is $L^2(\R^n) \times L^2(\R^n) \to L^1(\R^n)$ bounded whenever $m\in L^4_s(\R^{2n})$. 

We prove Theorem~\ref{T:sobolev} as a consequence of a stronger result involving a space of Besov type. Let us now introduce this function space as well. We let $\varphi_0$ be a smooth function on $\R^{2n}$ satisfying $\varphi_0(\xi)=1$ if $|\xi|\leq 1$ and $\varphi_0(\xi)=0$ if $|\xi|\geq \frac{3}{2}$. Further, if $k\in \N$ then we set $\varphi_k(\xi)=\varphi_0(2^{-k}\xi) - \varphi_0(2^{1-k}\xi)$. 
We denote by $\{\Lambda_k\}_{k=0}^\infty$ the inhomogeneous Littlewood-Paley decomposition, defined via the Fourier transform as $\wh{\Lambda_k f} = \varphi_k \wh{f}$ for $k\in \Z^+_0$. We then let $B(\R^{2n})$ be the space of all functions $f$ on $\R^{2n}$ satisfying
$$
\|f\|_{B(\R^{2n})} = \sum_{k=0}^\infty 2^{\frac{nk}{2}} \|\Lambda_k f\|_{L^{4,\infty}(\R^{2n})}<\infty.
$$
The space $B(\R^{2n})$ is the Besov space with smoothness index $n/2$ and summability index $1$ built upon the Lorentz space $L^{4,\infty}(\R^{2n})$. The family of Besov spaces built upon Lorentz spaces and its relationship to Lorentz-Sobolev spaces was studied in the recent paper~\cite{ST} where it was shown, in particular, that $L^{4,\infty}_s(\R^{2n}) \hookrightarrow B(\R^{2n})$ whenever $s>n/2$. 

\begin{thm}\label{T:besov}
Let $m$ be a function on $\R^{2n}$ belonging to the Besov space $B(\R^{2n})$. Then the associated operator $T_m$ admits a bounded extension from $L^2(\R^n) \times L^2(\R^n)$ to $L^1(\R^n)$ and
$$
\|T_m(f,g)\|_{L^1(\R^n)} \leq C \|m\|_{B(\R^{2n})} \|f\|_{L^2(\R^n)} \|g\|_{L^2(\R^n)}
$$
for some dimensional constant $C$.
\end{thm}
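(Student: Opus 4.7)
The plan is to reduce everything to a single dyadic block by applying an inhomogeneous Littlewood--Paley decomposition to $m$, and then, on each block, to combine a tensorized wavelet expansion with the combinatorial partition supplied by Lemma~\ref{L:decomposition}, in the spirit of~\cite{GHH,GHH2,GHS}. First I would write $T_m=\sum_{k=0}^\infty T_{\Lambda_k m}$ and reduce matters to proving the per-scale estimate
$$
\|T_{\Lambda_k m}(f,g)\|_{L^1(\R^n)}\lesssim 2^{nk/2}\,\|\Lambda_k m\|_{L^{4,\infty}(\R^{2n})}\,\|f\|_{L^2(\R^n)}\,\|g\|_{L^2(\R^n)},
$$
which upon summation in $k$ yields exactly $\|m\|_{B(\R^{2n})}$ on the right-hand side.

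For fixed $k$ I would exploit that $\wh{\Lambda_k m}$ is supported in a ball of radius $\sim 2^k$ in order to expand
$$
\Lambda_k m(\xi,\eta)=\sum_{(j_1,j_2)\in\Z^n\times\Z^n}c^k_{j_1,j_2}\,\Phi_{k,j_1}(\xi)\,\Phi_{k,j_2}(\eta),
$$
where $\{\Phi_{k,j}\}_j$ is a one-dimensional wavelet/wave-packet system at scale $2^{-k}$, e.g.\ of Meyer type adapted to $\varphi_k$. Since $\Lambda_k m$ is essentially constant on cubes of side $2^{-k}$, the measure of the level set $\{|\Lambda_k m|>\lambda\}$ is comparable to $2^{-2nk}$ times the cardinality of the corresponding set of coefficients, giving
$$
\|c^k\|_{\ell^{4,\infty}(\Z^n\times\Z^n)}\lesssim 2^{nk/2}\,\|\Lambda_k m\|_{L^{4,\infty}(\R^{2n})}.
$$
Substituting this expansion into the definition of $T_{\Lambda_k m}$ turns the operator into the discrete bilinear sum
$$
T_{\Lambda_k m}(f,g)(x)=\sum_{j_1,j_2}c^k_{j_1,j_2}\,f_{k,j_1}(x)\,g_{k,j_2}(x),
$$
where $f_{k,j_1}$ has Fourier support in a ball of radius $\sim 2^{-k}$ centered at $2^{-k}j_1$, and analogously $g_{k,j_2}$ around $2^{-k}j_2$. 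Plancherel combined with the bounded overlap of these balls gives $\sum_{j_1}\|f_{k,j_1}\|_{L^2}^2\lesssim\|f\|_{L^2}^2$, and the analogous estimate for $g$.

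Now I would apply Lemma~\ref{L:decomposition} to $c^k$ to obtain a partition $\Z^n\times\Z^n=S_1\cup S_2$ whose slices satisfy~\eqref{E:1} and~\eqref{E:2}. For the $S_1$-piece I would regroup with $j_2$ outside,
$$
\sum_{(j_1,j_2)\in S_1}c^k_{j_1,j_2}\,f_{k,j_1}(x)\,g_{k,j_2}(x)=\sum_{j_2}g_{k,j_2}(x)\,F_{j_2}(x),\q F_{j_2}=\sum_{j_1:\,(j_1,j_2)\in S_1}c^k_{j_1,j_2}\,f_{k,j_1},
$$
apply the Cauchy--Schwarz inequality in $x$ and then in $j_2$, invoke Plancherel (using the disjoint Fourier supports of the $f_{k,j_1}$) to write $\|F_{j_2}\|_{L^2}^2=\sum_{j_1:\,(j_1,j_2)\in S_1}|c^k_{j_1,j_2}|^2\|f_{k,j_1}\|_{L^2}^2$, and finally, after interchanging the order of summation, invoke~\eqref{E:1} in the form $\sum_{j_2:\,(j_1,j_2)\in S_1}|c^k_{j_1,j_2}|^2\leq C\|c^k\|^2_{\ell^{4,\infty}}$ for every fixed $j_1$. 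This yields the desired $S_1$-contribution to the per-scale bound; the $S_2$-contribution is handled by the symmetric argument based on~\eqref{E:2}. Summing the resulting estimate over $k$ then finishes the proof.

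The main obstacle I expect to be the construction just sketched: producing a tensorized expansion of $\Lambda_k m$ which is simultaneously (a) separately Fourier-localized in $\xi$ and in $\eta$, so that the factorization $f_{k,j_1}g_{k,j_2}$ delivers the clean Plancherel-type orthogonality in each variable, and (b) faithful enough at the coefficient level that $\|c^k\|_{\ell^{4,\infty}}$ is controlled by $2^{nk/2}\|\Lambda_k m\|_{L^{4,\infty}}$. A Meyer-type wavelet basis adapted to the annulus $\mathrm{supp}\,\varphi_k$ should satisfy both requirements simultaneously; once it is in place, the remainder of the argument is a fairly routine Cauchy--Schwarz/Plancherel/combinatorial manipulation.
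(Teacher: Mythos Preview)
Your overall plan matches the paper's proof almost exactly: Littlewood--Paley reduce to a single block, expand that block in a tensor-product Meyer wavelet system, transfer the $L^{4,\infty}$ control to an $\ell^{4,\infty}$ bound on the coefficient array, apply Lemma~\ref{L:decomposition} to split $\Z^n\times\Z^n=S_1\cup S_2$, and finish by Cauchy--Schwarz in each half. The paper packages the per-scale estimate as Proposition~\ref{P:fourier_compact} and then sums in $k$ exactly as you describe.

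There is, however, a gap precisely at the point you flag as the main obstacle. You want a tensor expansion $\Lambda_k m(\xi,\eta)=\sum c^k_{j_1,j_2}\Phi_{k,j_1}(\xi)\Phi_{k,j_2}(\eta)$ with scalar coefficients \emph{and} you want the $f_{k,j_1}$ to have disjoint Fourier supports so that the Plancherel identity $\|F_{j_2}\|_{L^2}^2=\sum_{j_1}|c^k_{j_1,j_2}|^2\|f_{k,j_1}\|_{L^2}^2$ holds. These two requirements are incompatible: Meyer wavelets have compactly supported Fourier transforms, but the wavelets themselves---which here act as multipliers on the frequency side---are only Schwartz, not compactly supported, so the functions $\Phi_{k,j_1}\wh f$ do not have disjoint (or even compact) supports and your claimed identity fails. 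The paper does not attempt to obtain such orthogonality; instead it exploits the rapid decay directly. One writes the wavelet as a product $\Phi_1\cdot\Phi_2$ with $\Phi_1(y)=(1+|y|)^{-n}$ and $\Phi_2$ still Schwartz, applies Cauchy--Schwarz in the $j_1$-sum, and then uses the summability $\sum_{j_1}\Phi_i^2(\,\cdot-j_1)\lesssim 1$ from Lemma~\ref{L:schwartz} to obtain $\sum_{j_2}\|F_{j_2}\|_{L^2}^2\lesssim\|c^k\|_{\ell^{4,\infty}}^2\|f\|_{L^2}^2$ in one stroke, bypassing the intermediate identity entirely; see estimates~\eqref{E:omega}--\eqref{E:omega2}. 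Your single-scale sampling variant can be salvaged by the same device. Incidentally, the paper uses the full multiscale Meyer basis (all wavelet scales $j$ with $2^j\lesssim 2^k$ contribute) rather than a single-scale expansion, chiefly because the $\ell^{4,\infty}$ coefficient bound then drops out of the standard wavelet characterisation of $L^p$ via real interpolation (Lemma~\ref{L:discrete}).
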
 

The following proposition plays a crucial role in the proofs of Theorems~\ref{T:sobolev} and~\ref{T:besov}.

\begin{prop}\label{P:fourier_compact}
Let $k\in \N$ and let $m \in L^{4,\infty}(\R^{2n})$ be a function whose Fourier transform is supported in the ball $\{x\in \R^{2n}: |x|<2^{k}\}$. Then there is a dimensional constant $C$ such that
\begin{equation}\label{E:compact}
\|T_m(f,g)\|_{L^1(\R^n)} \leq C 2^{\frac{nk}{2}} \|m\|_{L^{4,\infty}(\R^{2n})} \|f\|_{L^2(\R^n)} \|g\|_{L^2(\R^n)}.
\end{equation}
\end{prop}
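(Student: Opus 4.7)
My plan is to expand $m$ discretely using the band-limitedness of $\wh m$, convert the bilinear problem to a bilinear sequence estimate, and then apply Lemma~\ref{L:decomposition}. Concretely, I fix a tensor-product Schwartz function $\Psi(\xi,\eta) = \psi(\xi)\psi(\eta)$ with $\wh\psi\in C_c^\infty(\R^n)$ equal to $1$ on a suitable cube containing the projection of the support of $\wh m$ onto each axis. The Plancherel--Polya sampling formula then yields
$$
m(\xi,\eta) = \sum_{(j_1,j_2)\in\Z^n\times\Z^n} c_{j_1,j_2}\,\psi(2^k\xi - j_1)\,\psi(2^k\eta - j_2),
$$
where $c_{j_1,j_2}$ is essentially the sample $m(2^{-k}j_1, 2^{-k}j_2)$. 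A weak-Lorentz version of the Plancherel--Polya inequality for band-limited functions then gives $\|(c_{j_1,j_2})\|_{\ell^{4,\infty}(\Z^{2n})} \lesssim 2^{nk/2}\|m\|_{L^{4,\infty}(\R^{2n})}$, and this is where the factor $2^{nk/2}$ in the final estimate ultimately comes from.

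Substituting the expansion above into $T_m$ and exploiting the tensor structure of $\Psi$, I obtain
$$
T_m(f,g)(x) = \sum_{j_1,j_2} c_{j_1,j_2}\,f_{j_1}(x)\,g_{j_2}(x),
$$
where $\wh{f_{j_1}}(\xi) = \psi(2^k\xi - j_1)\wh f(\xi)$ and analogously for $g_{j_2}$. The Schwartz localization of $\psi$ produces almost-orthogonality: the Bessel-type estimate $\sum_{j_1}\|f_{j_1}\|_{L^2}^2 \lesssim \|f\|_{L^2}^2$ holds, and $\|\sum_{j_1\in I} a_{j_1} f_{j_1}\|_{L^2}^2 \lesssim \sum_{j_1\in I}|a_{j_1}|^2\|f_{j_1}\|_{L^2}^2$ for any subset $I\subseteq\Z^n$ and sequence $(a_{j_1})$, with analogous statements for the $g_{j_2}$.

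Next I apply Lemma~\ref{L:decomposition} to $(c_{j_1,j_2})$ to partition $\Z^n\times\Z^n = S_1\sqcup S_2$ satisfying the row and column $\ell^2$ bounds, and split $T_m = T_1 + T_2$ accordingly. The crucial point is to bound each $T_i$ using the correct ordering of summation. For $T_1$, I take $j_2$ as the outer index: $T_1(f,g)(x) = \sum_{j_2} g_{j_2}(x) H_{j_2}(x)$ with $H_{j_2}(x) = \sum_{j_1:(j_1,j_2)\in S_1} c_{j_1,j_2} f_{j_1}(x)$. Pointwise Cauchy--Schwarz in $j_2$ combined with Cauchy--Schwarz in $x$ gives $\|T_1(f,g)\|_{L^1}\lesssim \|g\|_{L^2}\bigl(\sum_{j_2}\|H_{j_2}\|_{L^2}^2\bigr)^{1/2}$. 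Applying almost-orthogonality of the $f_{j_1}$'s to estimate $\|H_{j_2}\|_{L^2}^2\lesssim \sum_{j_1:(j_1,j_2)\in S_1}|c_{j_1,j_2}|^2\|f_{j_1}\|_{L^2}^2$, then swapping the order of summation and invoking the $S_1$ row bound from Lemma~\ref{L:decomposition},
$$
\sum_{j_2}\|H_{j_2}\|_{L^2}^2 \lesssim \sum_{j_1}\|f_{j_1}\|_{L^2}^2\Bigl(\sum_{j_2:(j_1,j_2)\in S_1}|c_{j_1,j_2}|^2\Bigr) \lesssim \|c\|_{\ell^{4,\infty}}^2 \|f\|_{L^2}^2.
$$
Hence $\|T_1(f,g)\|_{L^1}\lesssim 2^{nk/2}\|m\|_{L^{4,\infty}}\|f\|_{L^2}\|g\|_{L^2}$. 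The bound for $T_2$ is symmetric: I take $j_1$ as the outer index and use the $S_2$ column bound together with almost-orthogonality of the $g_{j_2}$'s.

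The hard part will be matching the Lemma~\ref{L:decomposition} decomposition with the bilinear structure correctly: the row-bounded set $S_1$ must be paired with placing $j_1$ as the \emph{inner} summation variable, which is exactly where the orthogonality of the $f_{j_1}$ pieces gets used, while $S_2$ plays the symmetric role with the $g_{j_2}$. Verifying the Plancherel--Polya estimate $\|(c_{j_1,j_2})\|_{\ell^{4,\infty}(\Z^{2n})}\lesssim 2^{nk/2}\|m\|_{L^{4,\infty}}$ for the discrete samples of a band-limited function is another technical point that requires care, though it should follow from standard considerations once the right sampling density is chosen.
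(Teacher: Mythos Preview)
Your approach is correct and takes a genuinely different route from the paper's proof. The paper expands $m$ in a Meyer wavelet basis, applies Lemma~\ref{L:decomposition} at each wavelet scale $j$, and then sums over the $O(k)$ scales that survive (the band-limitedness of $\wh m$ kills scales $j$ with $2^j \gtrsim 2^k$); the factor $2^{nk/2}$ arises as $\sum_{j \lesssim k} 2^{jn/2}$, after combining the bilinear bound $2^{jn}\|a^{j,G}\|_{\ell^{4,\infty}}$ at scale $j$ with the wavelet coefficient estimate of Lemma~\ref{L:discrete}. Your argument replaces this multi-scale expansion with a single-scale Plancherel--Polya sampling at the critical frequency, applies Lemma~\ref{L:decomposition} once to the sample sequence, and obtains $2^{nk/2}$ directly from the scaling of the $\ell^{4,\infty}$ norm of the samples. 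This is more elementary: you avoid the wavelet characterization of $L^p$ (which underlies Lemma~\ref{L:discrete}) and need only the classical Plancherel--Polya inequality for band-limited functions, which extends to $L^{4,\infty}$ by real interpolation of the linear map $f\mapsto ((f*\phi)(j))_j$ between $L^p$ and $\ell^p$, exactly as you suggest.

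One technical point requires care: your stated almost-orthogonality bound $\bigl\|\sum_{j_1 \in I} a_{j_1}f_{j_1}\bigr\|_{L^2}^2 \lesssim \sum_{j_1 \in I}|a_{j_1}|^2\|f_{j_1}\|_{L^2}^2$ is not literally correct, since $\psi$ is Schwartz but not compactly supported and the translates $\psi(2^k\cdot - j_1)$ have overlapping tails. The fix is a weighted Cauchy--Schwarz: write $\psi = \Phi_1\Phi_2$ with $\Phi_1(\xi) = (1+|\xi|)^{-N}$ and $\Phi_2 = \psi/\Phi_1$; this replaces $\|f_{j_1}\|_{L^2}^2$ on the right by $\int|\wh f(\xi)|^2\Phi_1^2(2^k\xi - j_1)\,d\xi$, which still satisfies the Bessel bound $\sum_{j_1}(\cdots) \lesssim \|f\|_{L^2}^2$ needed for the final step. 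The paper handles the analogous issue in exactly this way (see the derivation of~\eqref{E:omega2}), and with this modification the rest of your argument goes through unchanged.
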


A variant of Proposition~\ref{P:fourier_compact} appeared in~\cite[Proposition 4.1]{KMT}. On the one hand, \cite[Proposition 4.1]{KMT} is more general as it applies to the setting of pseudodifferential operators with symbols whose Fourier transform is supported in a rectangle centered at the origin. On the other hand, once we restrict ourselves to the special setting of Proposition~\ref{P:fourier_compact} then Proposition~\ref{P:fourier_compact} provides a more precise estimate than~\cite[Proposition 4.1]{KMT}, in the sense that it lowers the power of $2$ on the right-hand side of~\eqref{E:compact} from $2^{nk}$ to $2^{nk/2}$. 

We will need several auxiliary results to prove Proposition~\ref{P:fourier_compact}. We start with the following straightforward lemma.

\begin{lm}\label{L:schwartz}
Let $\Phi$ be a non-negative function on $\R^n$ satisfying
$$
\sup_{x\in \R^n} \Phi(x) (1+|x|)^\gamma <\infty
$$ 
for some $\gamma >n$. Then
$$
\sup_{x\in \R^n} \sum_{k\in \Z^n} \Phi(x-k)<\infty.
$$
\end{lm}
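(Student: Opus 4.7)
The plan is to reduce this to the well-known fact that $\int_{\R^n}(1+|y|)^{-\gamma}\,dy$ is finite when $\gamma>n$, via a straightforward sum-to-integral comparison. First I would use the hypothesis to write $\Phi(y)\leq A(1+|y|)^{-\gamma}$ for all $y\in\R^n$, where $A$ is the finite supremum in the hypothesis, so that it suffices to establish
$$
\sup_{x\in\R^n}\sum_{k\in\Z^n}(1+|x-k|)^{-\gamma}<\infty.
$$

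Next, for each $k\in\Z^n$ let $Q_k=k+[-\tfrac12,\tfrac12)^n$ be the unit cube centered at $k$. For any $y\in Q_k$ one has $|y-k|\leq \tfrac{\sqrt{n}}{2}$, and hence $|x-k|\leq|x-y|+\tfrac{\sqrt n}{2}$, which gives the pointwise comparison
$$
(1+|x-k|)^{-\gamma}\leq C_{n,\gamma}\,(1+|x-y|)^{-\gamma}\qquad\text{for every }y\in Q_k,
$$
with $C_{n,\gamma}$ depending only on $n$ and $\gamma$. Averaging this inequality over the unit-measure cube $Q_k$ and summing over $k\in\Z^n$ (using that the cubes tile $\R^n$) yields
$$
\sum_{k\in\Z^n}(1+|x-k|)^{-\gamma}\leq C_{n,\gamma}\int_{\R^n}(1+|x-y|)^{-\gamma}\,dy
=C_{n,\gamma}\int_{\R^n}(1+|z|)^{-\gamma}\,dz,
$$
where the last equality uses translation invariance.

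Finally, since $\gamma>n$, passing to polar coordinates shows that the last integral is finite, and the resulting bound is independent of $x$. Combining with the first step gives the desired uniform bound on $\sum_{k\in\Z^n}\Phi(x-k)$. There is no genuine obstacle here; the only point requiring a little care is the cube-to-integral comparison, which has to be carried out with constants that do not depend on $x$ or $k$, and this is ensured by using the cubes $Q_k$ of fixed (unit) diameter.
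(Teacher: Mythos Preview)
Your argument is essentially correct, but there is a small slip in the triangle inequality step. From $|y-k|\le\frac{\sqrt n}{2}$ you wrote $|x-k|\le|x-y|+\frac{\sqrt n}{2}$; that is true, but it bounds $|x-k|$ from \emph{above} and therefore bounds $(1+|x-k|)^{-\gamma}$ from \emph{below}, which is the wrong direction for the comparison you state. What you need is the other form of the triangle inequality, $|x-y|\le|x-k|+\frac{\sqrt n}{2}$, which gives $1+|x-y|\le(1+\tfrac{\sqrt n}{2})(1+|x-k|)$ and hence $(1+|x-k|)^{-\gamma}\le(1+\tfrac{\sqrt n}{2})^{\gamma}(1+|x-y|)^{-\gamma}$. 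With this correction the rest of your argument goes through verbatim.

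As for the comparison with the paper: both proofs first reduce to the case $\Phi(y)=(1+|y|)^{-\gamma}$. From there the paper takes a slightly different route: instead of comparing the lattice sum to an integral, it replaces $x$ by its integer part $\lfloor x\rfloor$, proving $1+|x-k|\ge\frac{1}{2n+1}(1+|\lfloor x\rfloor-k|)$, so that the sum over $k$ becomes the single $x$-independent series $\sum_{k\in\Z^n}(1+|k|)^{-\gamma}$, which converges since $\gamma>n$. Your sum-to-integral comparison and the paper's shift-to-the-nearest-lattice-point argument are standard alternatives of the same difficulty; yours ends with a convergent integral, the paper's with a convergent series.
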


\begin{proof}
Given $x=(x_1, x_2, \dots, x_n)\in \R^n$, we set 
$$
\lfloor x \rfloor=(\lfloor x_1 \rfloor, \lfloor x_2 \rfloor, \dots, \lfloor x_n \rfloor) \in \Z^n.
$$ 
Then $|x-\lfloor x \rfloor|<\sqrt{n} \leq n$, which yields that for any $k\in \Z^n$,
\begin{align*}
1+|x-k| 
&\geq 1+\left||x-\lfloor x \rfloor|-|\lfloor x \rfloor-k|\right| \\
&\geq 
\begin{cases}
1+\frac{|\lfloor x \rfloor-k|}{2} & \text{if } |\lfloor x \rfloor-k|\geq 2n\\
1 & \text{if }|\lfloor x \rfloor-k|<2n\\
\end{cases}\\
&\geq \frac{1}{2n+1}(1+|\lfloor x \rfloor-k|).
\end{align*}
Thus,
\begin{align*}
\sup_{x\in \R^n} \sum_{k\in \Z^n} \Phi(x-k) 
&\lesssim \sup_{x\in \R^n} \sum_{k\in \Z^n} (1+|x-k|)^{-\gamma}\\
&\lesssim \sup_{x\in \R^n} \sum_{k\in \Z^n} (1+|k-\lfloor x \rfloor|)^{-\gamma}\\
&=\sum_{k\in \Z^n} (1+|k|)^{-\gamma} <\infty. 
\end{align*}
\end{proof}

A key tool for proving Proposition~\ref{P:fourier_compact} is a representation of functions in terms of product-type wavelets. In particular, we will make use of the following fact, due to Meyer~\cite{M1,M2}:

There exist real-valued functions $\Psi_F$, $\Psi_M \in \mathcal S(\R)$ such that $\wh{\Psi_F}$ is compactly supported, $\wh{\Psi_M}$ is compactly supported away from the origin and, if we denote
\begin{align*}
\Psi_\beta(x)=\prod_{r=1}^{2n} \Psi_F(x_r-\beta_r), \quad &\beta = (\beta_1, \dots, \beta_{2n}) \in \Z^{2n},\\ 
&x=(x_1, \dots, x_{2n})\in \R^{2n},
\end{align*}
and
$$
\Psi_\beta^G(x)=\prod_{r=1}^{2n} \Psi_{G_r}(x_r-\beta_r), \quad \beta\in \Z^{2n}, ~x\in \R^{2n}, 
$$
where $G=(G_1,\dots, G_{2n}) \in \{F,M\}^{2n}\setminus \{(F, \dots, F)\}$,
then the family of functions 
$$
\Psi_\beta^{j,G}(x)=
\begin{cases}
\Psi_\beta(x), & j=0, ~ G=(F, \dots, F)\\
2^{(j-1)n} \Psi_\beta^G(2^{j-1}x), & j\in \mathbb N, ~ G\in \{F,M\}^{2n}\setminus \{(F, \dots, F)\}
\end{cases}
$$
forms an orthonormal basis in $L^2(\R^{2n})$. 

In addition, the same family of functions is also an unconditional basis in any Lebesgue space $L^p(\R^{2n})$ with $1<p<\infty$. Thus, setting
\begin{align*}
J
=&\{(j,G): ~j=0 \text{ and } G=(F, \dots, F),\\ 
&\text{ or } j\in \N \text{ and } G\in \{F,M\}^{2n} \setminus \{(F,\dots, F)\}\},
\end{align*}
any function $f$ in $L^p(\R^{2n})$ can be expressed in the form
$$
f=\sum_{(j,G)\in J} \sum_{\beta\in \Z^{2n}} \langle f,\Psi_\beta^{j,G}\rangle \Psi_\beta^{j,G},
$$
unconditional convergence being in $\mathcal S'(\R^{2n})$. Also, we have the equivalence
\begin{equation}\label{E:triebel}
\|f\|_{L^{p}(\R^{2n})} \approx \left\|\left(\sum_{(j,G)\in J} \sum_{\beta \in \Z^{2n}} |\langle f,\Psi_\beta^{j,G}\rangle 2^{jn} \chi_{j\beta} |^2 \right)^{\frac{1}{2}}\right\|_{L^{p}(\R^{2n})},
\end{equation} 
where $\chi_{j \beta}$ denotes the characteristic function of the cube centered at $2^{-j}\beta$ with side-length $2^{1-j}$. For the proof of this statement, see, e.g.,~\cite[Theorem 3.12]{TriebelTFSIII}.

\medskip
The final ingredient needed for the proof of Proposition~\ref{P:fourier_compact} is the following lemma, which can be obtained from~\eqref{E:triebel} using real interpolation.

\begin{lm}\label{L:discrete}
Suppose that $m\in L^{4,\infty}(\R^{2n})$. Let $(j,G)\in J$ and let $a^{j,G}=\{a^{j,G}_\beta\}_{\beta \in \Z^{2n}}$ be the sequence defined by $a^{j,G}_\beta=\langle m,\Psi_\beta^{j,G}\rangle$ for $\beta \in \Z^{2n}$. Then 
$$
\|a^{j,G}\|_{\ell^{4,\infty}(\Z^{2n})} \lesssim 2^{-\frac{jn}{2}} \|m\|_{L^{4,\infty}(\R^{2n})}.
$$
\end{lm}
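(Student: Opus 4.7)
The strategy is to obtain strong-type $(p,p)$ bounds for the linear map $T_{j,G} : m \mapsto a^{j,G}$ at two values of $p$ flanking $4$, with operator norms of the form $2^{jn(2/p-1)}$, and then apply real interpolation to deduce the desired weak-type estimate at $p=4$. The key structural observation is that the exponent $2/p-1$ is linear in $1/p$, so interpolation at $1/p=1/4$ will automatically produce the exponent $-1/2$, independently of the auxiliary choice of $p_0,p_1$.

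For the strong-type bound, I would fix $(j,G)\in J$ and restrict the double sum in \eqref{E:triebel} to this single pair, obtaining, for every $1<p<\infty$,
$$
\|m\|_{L^p(\R^{2n})} \gtrsim \left\|\left(\sum_{\beta\in\Z^{2n}} |a_\beta^{j,G}|^2\, 2^{2jn}\chi_{j\beta}\right)^{1/2}\right\|_{L^p(\R^{2n})}.
$$
Since the cubes $\chi_{j\beta}$ (for fixed $j$) have side length $2^{1-j}$, centers on the lattice $2^{-j}\Z^{2n}$, volume $2^{2n(1-j)}$, and overlap bounded by a constant depending only on $n$, partitioning $\R^{2n}$ into disjoint cubes of side $2^{-j}$ and comparing $\ell^2$ to $\ell^p$ sums over the boundedly many ``nearby'' indices via the power-mean inequality yields
$$
\left\|\left(\sum_\beta |a_\beta^{j,G}|^2\, 2^{2jn}\chi_{j\beta}\right)^{1/2}\right\|_{L^p(\R^{2n})} \approx 2^{jn(1-2/p)}\,\|a^{j,G}\|_{\ell^p(\Z^{2n})}.
$$
Combining these estimates produces
$$
\|a^{j,G}\|_{\ell^p(\Z^{2n})} \lesssim 2^{jn(2/p-1)}\,\|m\|_{L^p(\R^{2n})}, \qquad 1<p<\infty.
$$

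To conclude, I would fix any $p_0,p_1\in(1,\infty)$ with $p_0<4<p_1$ and the unique $\theta\in(0,1)$ satisfying $1/4 = (1-\theta)/p_0 + \theta/p_1$. The standard identifications for the real method of interpolation give $(L^{p_0}(\R^{2n}), L^{p_1}(\R^{2n}))_{\theta,\infty} = L^{4,\infty}(\R^{2n})$ and $(\ell^{p_0}(\Z^{2n}), \ell^{p_1}(\Z^{2n}))_{\theta,\infty} = \ell^{4,\infty}(\Z^{2n})$. Applying the real interpolation theorem to the linear map $T_{j,G}$ together with the bounds from the previous step gives
$$
\|a^{j,G}\|_{\ell^{4,\infty}(\Z^{2n})} \lesssim 2^{jn[(1-\theta)(2/p_0-1)+\theta(2/p_1-1)]}\,\|m\|_{L^{4,\infty}(\R^{2n})} = 2^{-jn/2}\,\|m\|_{L^{4,\infty}(\R^{2n})},
$$
since the bracketed exponent simplifies to $2\bigl((1-\theta)/p_0 + \theta/p_1\bigr) - 1 = 2/4 - 1 = -1/2$.

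The main obstacle, such as it is, is confined to the first step: one must verify the $L^p$-norm equivalence for the square-function expression while carefully tracking the scaling $2^{jn(1-2/p)}$ against the volume $2^{2n(1-j)}$ of each cube and the bounded overlap of the family $\{\chi_{j\beta}\}_\beta$. The interpolation step is then formally automatic, and the linearity of $2/p-1$ in $1/p$ is precisely what makes the final constant $2^{-jn/2}$ come out cleanly.
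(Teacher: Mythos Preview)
Your proof is correct and follows essentially the same route as the paper: use the wavelet square-function characterization \eqref{E:triebel} to get strong-type $L^p$ bounds, then interpolate to reach $L^{4,\infty}$. The only cosmetic differences are that the paper replaces the overlapping cubes $\chi_{j\beta}$ by the smaller \emph{disjoint} cubes $\widetilde{\chi_{j\beta}}$ of side $2^{-j}$ (which sidesteps your bounded-overlap argument and makes the norm computation exact rather than approximate), and that the paper interpolates the full square-function operator before extracting the single $(j,G)$ term, whereas you extract first and interpolate the linear map $m\mapsto a^{j,G}$; both orderings give the same constant $2^{-jn/2}$ for the same reason you identify.
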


\begin{proof}
Let $S$ be the sublinear operator defined as
$$
Sf =\left(\sum_{(j,G)\in J} \sum_{\beta\in \Z^{2n}} |\langle f,\Psi_\beta^{j,G}\rangle 2^{jn} \widetilde{\chi_{j\beta}} |^2 \right)^{\frac{1}{2}},
$$
where $\widetilde{\chi_{j \beta}}$ denotes the characteristic function of the cube centered at $2^{-j}\beta$ with side-length $2^{-j}$. Since $\widetilde{\chi_{j \beta}} \leq \chi_{j \beta}$, it follows from~\eqref{E:triebel} that the operator $S$ is bounded on $L^p(\R^{2n})$ for every $p\in (1,\infty)$. The Marcinkiewicz interpolation theorem (see, e.g.,~\cite[Chapter 4, Theorem 4.13]{BS}) then yields that $S$ is bounded on $L^{4,\infty}(\R^{2n})$, that is,
$$
\left\|\left(\sum_{(j,G)\in J} \sum_{\beta \in \Z^{2n}} |\langle m,\Psi_\beta^{j,G}\rangle 2^{jn} \widetilde{\chi_{j\beta}} |^2 \right)^{\frac{1}{2}}\right\|_{L^{4,\infty}(\R^{2n})} \lesssim \|m\|_{L^{4,\infty}(\R^{2n})}. 
$$
Therefore, fixing a pair $(j,G)$ and using that the supports of the functions $\widetilde{\chi_{j \beta}}$ do not overlap in $\beta$ (except perhaps for the boundary of the corresponding cubes), we obtain
\begin{equation}\label{E:continuous}
2^{jn} \left\|\sum_{\beta \in \Z^{2n}} a^{j,G}_\beta \widetilde{\chi_{j\beta}}\right\|_{L^{4,\infty}(\R^{2n})} \lesssim \|m\|_{L^{4,\infty}(\R^{2n})}. 
\end{equation}
Now, for any $\lambda>0$,
\begin{align*}
\left|\left\{x\in \R^{2n}:~ \sum_{\beta \in \Z^{2n}} |a^{j,G}_\beta| \widetilde{\chi_{j\beta}}(x) >\lambda\right\}\right|&\\
=2^{-2jn} \card\{\beta\in \Z^{2n}:~ |a^{j,G}_\beta|>\lambda\}&,
\end{align*}
which implies that
\begin{align*}
\left\|\sum_{\beta \in \Z^{2n}} a^{j,G}_\beta \widetilde{\chi_{j\beta}}\right\|_{L^{4,\infty}(\R^{2n})}
&=\sup_{\lambda>0} \lambda \left|\left\{x\in \R^{2n}:~ \sum_{\beta \in \Z^{2n}} |a^{j,G}_\beta| \widetilde{\chi_{j\beta}}(x) >\lambda\right\}\right|^{\frac{1}{4}}\\
&=2^{-\frac{jn}{2}}\sup_{\lambda>0} \lambda \left(\card\{\beta\in \Z^{2n}:~ |a^{j,G}_\beta|>\lambda\}\right)^{\frac{1}{4}}\\
&=2^{-\frac{jn}{2}} \|a^{j,G}\|_{\ell^{4,\infty}(\Z^{2n})}.
\end{align*}
Combining this equality with~\eqref{E:continuous} yields the conclusion.
\end{proof}

\begin{proof}[Proof of Proposition~\ref{P:fourier_compact}]
Let $(j,G)\in J$ and let $a^{j,G}$ be the sequence defined in Lemma~\ref{L:discrete}.
We observe that 
$$
a^{j,G}_\beta
=\left<m, \Psi_\beta^{j,G} \right>
=\left< \wh{m}, (\Psi_\beta^{j,G})\spcheck \right>
$$
and that for every $j\in \N$, the inverse Fourier transform of $\Psi_\beta^{j,G}$ is supported in the annulus $\{x\in \R^{2n}:~K_1 2^j <|x|<K_2 2^j\}$, where $K_1$ and $K_2$ are suitable dimensional constants. Using the support properties of $\wh{m}$, we thus deduce that $a^{j,G}_\beta=0$ whenever $j\in \N$ satisfies $K_1 2^j\geq 2^k$. 

Since $m \in L^{4,\infty}(\R^{2n})$, we can write $m=m_1+m_2$, where $m_1 \in L^3(\R^{2n})$ and $m_2 \in L^5(\R^{2n})$. As the family $\{\Psi_\beta^{j,G}\}$ is an unconditional basis in both $L^3(\R^{2n})$ and $L^5(\R^{2n})$, we deduce that $m$ can be represented as
$$
m=\sum_{(j,G)\in J} \sum_{\beta\in \Z^{2n}} a^{j,G}_\beta \Psi_\beta^{j,G},
$$
unconditional convergence being in $\mathcal S'(\R^{2n})$. Consequently, whenever $f$ and $g$ are Schwartz functions on $\R^{n}$, we have the pointwise identity
\begin{align*}
T_m(f,g)(x)
&= \sum_{(j,G)\in J} \sum_{\beta\in \Z^{2n}} a^{j,G}_\beta T_{\Psi_\beta^{j,G}} (f,g)(x)\\
&=\sum_{\substack{(j,G)\in J\\ j=0 \text{ or } K_1 2^j \leq 2^k}} \sum_{\beta\in \Z^{2n}} a^{j,G}_\beta T_{\Psi_\beta^{j,G}} (f,g)(x),\quad x\in \R^n.
\end{align*}

Let $U$ be any subset of $\{(j,G)\in J: ~j=0 \text{ or } K_1 2^j\leq 2^k\}$ and let $V$ be any finite subset of $\Z^{2n}$. By the Fatou lemma,
\begin{align*}
\|T_m(f,g)\|_{L^1(\R^n)} 
&\leq \sup_{U, V}  \left\|\sum_{(j,G)\in U} \sum_{\beta \in V} a^{j,G}_\beta T_{\Psi_\beta^{j,G}} (f,g)\right\|_{L^1(\R^n)}\\
&\leq \sup_{U, V} \sum_{(j,G)\in U} \left\|\sum_{\beta \in V} a^{j,G}_\beta T_{\Psi_\beta^{j,G}} (f,g)\right\|_{L^1(\R^n)}.
\end{align*}

We fix $(j,G)\in U$. We shall use the notation $\beta=(k,l)$, $(k,l)\in \Z^n \times \Z^n=\Z^{2n}$, 
$$
a_{\beta}^{j,G}=a_{k,l} \quad \text{and} \quad \Psi_{\beta}^{j,G}(\xi,\eta)=\omega_{1,k}(\xi) \omega_{2,l}(\eta),
$$ 
where
$$
\omega_{1,k}(\xi)=
\begin{cases}
\prod_{r=1}^n \Psi_F(\xi_r-k_r), & j=0,\\ 
2^{\frac{(j-1)n}{2}} \prod_{r=1}^n \Psi_{G_r}(2^{j-1}\xi_r-k_r), &j\in \N,
\end{cases}
$$
and
$$
\omega_{2,l}(\eta)=
\begin{cases}
\prod_{r=1}^n \Psi_F(\eta_r-l_r), & j=0,\\ 
2^{\frac{(j-1)n}{2}} \prod_{r=1}^n \Psi_{G_{r+n}}(2^{j-1}\eta_r-l_r), &j\in \N.
\end{cases}
$$

According to Lemma~\ref{L:decomposition}, there are two pairwise disjoint sets $S_1$, $S_2$ in $\Z^{2n}$ such that $S_1 \cup S_2=\Z^{2n}$,
$$
\left(\sum_{l:~(k,l)\in S_1} |a_{k,l}|^2\right)^{\frac{1}{2}} \leq C \|a\|_{\ell^{4,\infty}(\Z^{2n})} \quad \text{for every $k\in \Z^n$}
$$
and
$$
\left(\sum_{k:~(k,l)\in S_2} |a_{k,l}|^2\right)^{\frac{1}{2}} \leq C \|a\|_{\ell^{4,\infty}(\Z^{2n})} \quad \text{for every $l\in \Z^n$}.
$$

Let us now use Lemma~\ref{L:schwartz} to derive two preliminary estimates that will be needed later on. 
Assume that $j\in \mathbb N$ and $f\in \mathcal S(\R^n)$. Then the function $\Phi(\xi)=\prod_{r=1}^n \Psi^2_{G_r}(\xi_r)$ belongs to the Schwartz space $\mathcal S(\R^n)$, and therefore satisfies the assumption of Lemma~\ref{L:schwartz}. Consequently,
\begin{align}\label{E:omega}
&\sum_{k\in \Z^n} \|\omega_{1,k} \wh{f}\|^2_{L^2(\R^n)}
=\int_{\R^n} |\wh{f}(\xi)|^2 \sum_{k\in \Z^n} |\omega_{1,k}(\xi)|^2\,d\xi\\
&\approx 2^{jn} \int_{\R^n} |\wh{f}(\xi)|^2 \sum_{k\in \Z^n} \Phi(2^{j-1}\xi -k) \,d\xi \nonumber\\
&\lesssim2^{jn} \|f\|_{L^2(\R^n)}^2. \nonumber
\end{align}

To derive the second estimate, we apply Lemma~\ref{L:schwartz} with the second power of the functions 
$$
\Phi_1(\xi)=(1+|\xi|)^{-n} \quad \text{and} \quad \Phi_2(\xi)=(1+|\xi|)^{n} \prod_{r=1}^n\Psi_{G_r}(\xi_r), \quad \xi \in \R^n.
$$ 
We notice that 
\begin{equation*}
\omega_{1,k}(\xi)
 = 2^{\frac{(j-1)n}{2}}\Phi_1(2^{j-1}\xi -k) \Phi_2(2^{j-1}\xi -k).
\end{equation*}
Therefore,
\begin{align}\label{E:omega2}
&\sum_{l \in \Z^n} \big\|\sum_{k: ~ (k,l) \in S_1 \cap V} a_{k,l} \omega_{1,k} \wh{f}\big\|_{L^2(\R^n)}^2\\
&= 2^{(j-1)n}\sum_{l\in \Z^n} \big\|\sum_{k:~(k,l) \in S_1 \cap V} a_{k,l} \Phi_1(2^{j-1}\xi-k) \Phi_2(2^{j-1}\xi-k) \wh{f}(\xi)\big\|_{L^2(\R^n)}^2 \nonumber\\
&\lesssim 2^{jn} \sum_{l\in \Z^n} \int_{\R^n} |\wh{f}(\xi)|^2 \big( \sum_{k:~(k,l) \in S_1 \cap V} |a_{k,l}|^2 \Phi_1^2(2^{j-1}\xi-k) \big) \big( \sum_{k\in \Z^n} \Phi_2^2(2^{j-1}\xi-k) \big) \,d\xi \nonumber\\
&\lesssim 2^{jn} \sum_{k\in \Z^n} \sum_{l:~(k,l) \in S_1 \cap V} |a_{k,l}|^2 \int_{\R^n} |\wh{f}(\xi)|^2 \Phi_1^2(2^{j-1}\xi-k)\,d\xi \nonumber\\
&\lesssim 2^{jn} \|a\|_{\ell^{4,\infty}(\Z^{2n})}^2 \int_{\R^n} |\wh{f}(\xi)|^2 \sum_{k\in \Z^n} \Phi_1^2(2^{j-1}\xi-k)\,d\xi \nonumber \\
&\lesssim 2^{jn} \|a\|_{\ell^{4,\infty}(\Z^{2n})}^2 \|f\|_{L^2(\R^n)}^2. \nonumber
\end{align}
One also has estimates analogous to~\eqref{E:omega} and~\eqref{E:omega2} if $j=0$ or if $\omega_{1,k}$ is replaced by $\omega_{2,l}$, $f$ is replaced by $g$ and $S_1$ is replaced by $S_2$.

Applying the inequalities we have just derived, we conclude that
\begin{align*}
&\big\|\sum_{\beta \in V} a^{j,G}_\beta T_{\Psi_\beta^{j,G}}(f,g)\big\|_{L^1(\R^n)}\\
&\leq \big\|\sum_{(k,l)\in S_1 \cap V} a_{k,l} \mathcal F^{-1}(\omega_{1,k} \wh{f}) \mathcal F^{-1}(\omega_{2,l} \wh{g})\big\|_{L^1(\R^n)}\\
&+\big\|\sum_{(k,l)\in S_2 \cap V} a_{k,l} \mathcal F^{-1}(\omega_{1,k} \wh{f}) \mathcal F^{-1}(\omega_{2,l} \wh{g})\big\|_{L^1(\R^n)}\\
&\leq \sum_{l\in \Z^n} \big\|\sum_{k: ~(k,l)\in S_1 \cap V} a_{k,l} \mathcal F^{-1}(\omega_{1,k} \wh{f}) \mathcal F^{-1}(\omega_{2,l} \wh{g})\big\|_{L^1(\R^n)}\\
&+ \sum_{k\in \Z^n} \big\|\sum_{l: ~(k,l)\in S_2 \cap V} a_{k,l} \mathcal F^{-1}(\omega_{1,k} \wh{f}) \mathcal F^{-1}(\omega_{2,l} \wh{g})\big\|_{L^1(\R^n)}\\
&\leq \sum_{l\in \Z^n} \|\omega_{2,l} \wh{g}\|_{L^2(\R^n)}  \big\|\sum_{k: ~(k,l)\in S_1 \cap V} a_{k,l} \omega_{1,k} \wh{f}\big\|_{L^2(\R^n)}\\
&+ \sum_{k\in \Z^n} \|\omega_{1,k} \wh{f}\|_{L^2(\R^n)}  \big\|\sum_{l: ~(k,l)\in S_2 \cap V} a_{k,l} \omega_{2,l} \wh{g}\big\|_{L^2(\R^n)}\\
&\lesssim \left( \sum_{l\in \Z^n} \|\omega_{2,l} \wh{g}\|^2_{L^2(\R^n)} \right)^{\frac{1}{2}} \left(\sum_{l\in \Z^n} \big\|\sum_{k: ~(k,l)\in S_1 \cap V} a_{k,l} \omega_{1,k} \wh{f}\big\|^2_{L^2(\R^n)} \right)^{\frac{1}{2}}\\
&+ \left( \sum_{k\in \Z^n} \|\omega_{1,k} \wh{f}\|^2_{L^2(\R^n)} \right)^{\frac{1}{2}} \left(\sum_{k\in \Z^n} \big\|\sum_{l: ~(k,l)\in S_2 \cap V} a_{k,l} \omega_{2,l} \wh{g}\big\|^2_{L^2(\R^n)} \right)^{\frac{1}{2}}\\
&\lesssim 2^{jn} \|a\|_{\ell^{4,\infty}(\Z^{2n})} \|f\|_{L^2(\R^n)} \|g\|_{L^2(\R^n)}.
\end{align*}
Using this and Lemma~\ref{L:discrete}, we obtain 
\begin{align*}
\|T_m(f,g)\|_{L^1(\R^n)} 
&\leq \sup_{U} \left(\sum_{(j,G)\in U} 2^{jn} \|a^{j,G}\|_{\ell^{4,\infty}(\Z^{2n})} \right)\|f\|_{L^2(\R^n)} \|g\|_{L^2(\R^n)}\\
&\lesssim \left(\sum_{\substack{(j,G)\in J\\ j=0 \text{ or } K_1 2^j\leq 2^k}} 2^{\frac{jn}{2}} \right) \|m\|_{L^{4,\infty}(\R^{2n})} \|f\|_{L^2(\R^n)} \|g\|_{L^2(\R^n)}\\
&\approx 2^{\frac{kn}{2}}\|m\|_{L^{4,\infty}(\R^{2n})} \|f\|_{L^2(\R^n)} \|g\|_{L^2(\R^n)}.
\end{align*}
\end{proof}

\begin{proof}[Proof of Theorem~\ref{T:besov}]
We observe that the Littlewood-Paley decomposition $\{\Lambda_k m\}_{k=0}^\infty$ of $m$ has the property that $\wh{\Lambda_k m}$ is supported in the ball $\{x\in \R^{2n}:~ |x|<2^{k+2}\}$ for each $k\in \Z^+_0$. Proposition~\ref{P:fourier_compact} then yields
\begin{align*}
\|T_m(f,g)\|_{L^1(\R^n)}
&\leq \sum_{k=0}^\infty \|T_{\Lambda_k m} (f,g)\|_{L^1(\R^n)}\\
&\lesssim \sum_{k=0}^\infty 2^{\frac{nk}{2}} \|\Lambda_k m\|_{L^{4,\infty}(\R^{2n})} \|f\|_{L^2(\R^n)} \|g\|_{L^2(\R^n)}\\
&=\|m\|_{B(\R^{2n})} \|f\|_{L^2(\R^n)} \|g\|_{L^2(\R^n)}.
\end{align*}
\end{proof} 

\begin{proof}[Proof of Theorem~\ref{T:sobolev}]
This is a consequence of Theorem~\ref{T:besov} and of the embedding $L^{4,\infty}_s(\R^{2n}) \hookrightarrow B(\R^{2n})$ which holds whenever $s>\frac{n}{2}$ according to~\cite[Theorem 1.2]{ST}. 
\end{proof}

\section{Proof of Theorem~\ref{T:sharpness}}\label{S:example}

In this section we focus on proving the sharpness of Theorem~\ref{T:sobolev}, as stated in Theorem~\ref{T:sharpness}. The following proposition is an important step towards achieving this goal.

\begin{prop}\label{P:bumps}
Assume that $\Psi$ is a nontrivial smooth function supported in the set $\{x\in \R^{2n}: ~ |x|_{\infty}<\frac{1}{10}\}$. Given a sequence $c=\{c_{k,l}\}_{(k,l)\in \Z^n \times \Z^n}$ of complex numbers, consider the function $m$ defined by
\begin{equation}\label{E:form}
m(\xi,\eta)=\sum_{(k,l)\in \Z^n \times \Z^n} c_{k,l} \Psi(\xi-k,\eta-l), \quad (\xi,\eta)\in \R^{n}\times \R^n.
\end{equation}
If $c\in \ell^{4,\infty}(\Z^{2n})$ then the operator $T_m$ admits a bounded extension from $L^2(\R^n) \times L^2(\R^n)$ to $L^1(\R^n)$, and there is a constant $C$ depending on $n$ and $\Psi$ such that
$$
\|T_m(f,g)\|_{L^1(\R^n)} \leq C \|c\|_{\ell^{4,\infty}(\Z^{2n})} \|f\|_{L^2(\R^n)} \|g\|_{L^2(\R^n)}.
$$

Conversely, assume that $d=\{d_{k,l}\}_{(k,l)\in \Z^n \times \Z^n}$ is a bounded sequence of non-negative numbers which satisfies the monotonicity assumption
\begin{equation}\label{E:monotonicity_c}
d_{k_1,l_1}  \leq d_{k_2,l_2} \quad \text{if } |(k_1,l_1)|_\infty > |(k_2,l_2)|_\infty
\end{equation}
and does not belong to $\ell^{4,\infty}(\Z^{2n})$.
Then there exists a sequence $c=\{c_{k,l}\}_{(k,l)\in \Z^n \times \Z^n}$ of real numbers such that $|c_{k,l}|=d_{k,l}$ for all $(k,l)\in \Z^{n} \times \Z^n$, for which the associated operator $T_m$ is unbounded from $L^2(\R^n) \times L^2(\R^n)$ to $L^1(\R^n)$.
\end{prop}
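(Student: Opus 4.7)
The plan splits into the two parts of the statement.

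\textbf{Sufficiency of $c \in \ell^{4,\infty}$.} The plan is to reduce to Theorem~\ref{T:sobolev} by showing that $m \in L^{4,\infty}_s(\R^{2n})$ for some even integer $s > n/2$. The key point is that since $\Psi$ is supported in $\{|x|_\infty < 1/10\}$, the translates $\Psi(\cdot - (k,l))$ and all their partial derivatives have pairwise disjoint supports. Hence for every multi-index $\alpha$, $|\partial^\alpha m(x)| = |c_{k,l}|\,|\partial^\alpha \Psi(x-(k,l))|$ on the support of the bump at $(k,l)$ and vanishes elsewhere. A direct level-set computation then yields $\|\partial^\alpha m\|_{L^{4,\infty}(\R^{2n})} \ls_{\alpha,\Psi} \|c\|_{\ell^{4,\infty}(\Z^{2n})}$. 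Taking $s$ to be an even integer with $s > n/2$, the operator $(I-\Delta)^{s/2}$ is a linear combination of partial derivatives of order at most $s$, so the previous estimate gives $m \in L^{4,\infty}_s(\R^{2n})$ with $\|m\|_{L^{4,\infty}_s} \ls \|c\|_{\ell^{4,\infty}}$, and Theorem~\ref{T:sobolev} finishes the argument.

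\textbf{Reduction of the negative direction to the torus.} Fix a Schwartz bump $\phi$ with $\phi(0)=1$ and support contained in $\{|\xi|_\infty < 1/40\}$. For finitely supported sequences $a=(a_k)$, $b=(b_l)$, let $f,g$ be the Schwartz functions with $\wh f = \sum_k a_k \phi(\cdot-k)$ and $\wh g = \sum_l b_l \phi(\cdot-l)$; by the disjointness of the supports of $\phi(\cdot-k)$, $\|f\|_{L^2(\R^n)}\|g\|_{L^2(\R^n)} \approx \|a\|_{\ell^2}\|b\|_{\ell^2}$. Using the support separation between $\Psi(\cdot-(k,l))$ and $\phi(\cdot-k')\otimes\phi(\cdot-l')$, a direct computation yields the pointwise identity
\[
T_m(f,g)(x) = G(x)\sum_{k,l} c_{k,l}\,a_k b_l\,e^{2\pi i x\cdot(k+l)},\qquad x\in\R^n,
\]
where $G\in\mathcal S(\R^n)$ is a fixed Schwartz function (depending only on $\Psi$ and $\phi$) whose $\Z^n$-periodization $\sum_{j\in\Z^n}|G(x+j)|$ is bounded above and below by positive constants on the torus $\mathbb T^n$. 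Periodizing, $\|T_m(f,g)\|_{L^1(\R^n)}\approx \|S_c(a,b)\|_{L^1(\mathbb T^n)}$, where $S_c(a,b)(x)=\sum_{k,l}c_{k,l}a_k b_l e^{2\pi i x\cdot(k+l)}$. Thus $L^2\times L^2\to L^1(\R^n)$ boundedness of $T_m$ is equivalent, up to constants, to boundedness of the bilinear multiplier $(a,b)\mapsto S_c(a,b)$ on $\mathbb T^n$ from $\ell^2\times\ell^2$ to $L^1(\mathbb T^n)$.

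\textbf{Construction of signs forcing unboundedness.} The hypothesis $d\notin\ell^{4,\infty}$ together with the monotonicity of $d$ yields sequences $\lambda_s\downarrow 0$ and $M_s\uparrow\infty$ with $\lambda_s M_s^{n/2}\to\infty$ and $d_{k,l}\geq\lambda_s$ whenever $|(k,l)|_\infty\leq M_s$. The plan is to select a sparse subsequence of scales so that the cubes $A_s=\{k:|k|_\infty\leq M_s/2\}$ produce essentially independent contributions, and then to construct signs $\epsilon^{(s)}_{k,l}$ on $A_s\times A_s$ together with test sequences $a^{(s)},b^{(s)}$ supported in $A_s$ so that the ratio $\|S_c(a^{(s)},b^{(s)})\|_{L^1(\mathbb T^n)}/(\|a^{(s)}\|_{\ell^2}\|b^{(s)}\|_{\ell^2})$ grows without bound in $s$. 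This parallels the sharpness example following Lemma~\ref{L:decomposition}: the failure of a row/column decomposition of $d$ at the $\ell^{4,\infty}$ threshold, which is precisely what drives the positive direction, should produce signs for which the $L^1$-norm of the bilinear sum $S_c(a^{(s)},b^{(s)})$ exceeds the naive Cauchy--Schwarz bound by a factor comparable to $\lambda_s M_s^{n/2}$. I expect the construction of such signs and test sequences to be the main technical obstacle: a plain Rademacher choice combined with indicator test sequences $a^{(s)}=b^{(s)}=\mathbf{1}_{A_s}$ yields, via Kahane--Khintchine, only ratio of order $\lambda_s$, so the extra gain of $M_s^{n/2}$ must come from a delicate use of the anti-diagonal/level-set structure of $d$, extracted either by a deterministic Riesz-product-type construction or by an averaging/Baire-category argument on the compact space of sign sequences $\{-1,+1\}^{\Z^{2n}}$.
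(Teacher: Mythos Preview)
Your sufficiency argument is correct and identical to the paper's.

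For the converse you have correctly identified that independent Rademacher signs $\epsilon_{k,l}$ yield only a ratio of order $\lambda_s$ after Khintchine, but you are missing the key idea, and it is much simpler than a Riesz-product or Baire-category construction: let the signs depend only on the \emph{output frequency} $k+l$. Write $c_{k,l}=a_{k+l}(t)\,d_{k,l}$, where $\{a_l(t)\}_{l\in\Z^n}$ is a Rademacher family indexed by $\Z^n$. With your test sequences $a^{(s)}=b^{(s)}=\mathbf 1_{A_s}$ (the paper uses a translated cube $I_K^n$ of side $b_K$), the bilinear sum becomes
\[
\sum_{l} a_l(t)\,e^{2\pi i x\cdot l}\Big(\sum_{j\in A_s:\,l-j\in A_s} d_{j,l-j}\Big),
\]
and Khintchine in $t$ produces $\big(\sum_l|\sum_j d_{j,l-j}|^2\big)^{1/2}$, with no $x$-dependence left. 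Because the $d_{j,l-j}\geq\lambda_s$ are \emph{nonnegative}, the inner sum is $\gtrsim\lambda_s M_s^{n}$ for roughly $M_s^{n}$ values of $l$, so the whole expression is $\gtrsim\lambda_s M_s^{3n/2}$; dividing by $\|a^{(s)}\|_{\ell^2}\|b^{(s)}\|_{\ell^2}\approx M_s^{n}$ gives exactly the missing factor $\lambda_s M_s^{n/2}\to\infty$. The paper then selects $t_K$ scale by scale and pieces the signs together on the disjoint blocks $I_K^n\times I_K^n$, setting $c_{k,l}=d_{k,l}$ off the union of these blocks.

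A smaller point: your claim that the periodization $\sum_{j\in\Z^n}|G(x+j)|$ is bounded below on $\mathbb T^n$ is not automatic for a generic Schwartz weight $G$. The paper avoids this by applying Fubini--Khintchine directly on $\R^n$; after Khintchine the square function is constant in $x$, so one only needs $\int_{\R^n}|G|>0$. Choosing $\phi\equiv 1$ on $\{|\xi|_\infty\leq 1/10\}$ (as the paper does with $\wh\varphi$) makes $G(x)=(\mathcal F^{-1}\Psi)(x,x)$, which is nontrivial since $\Psi$ is.
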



\begin{proof}
We first verify that if $c\in \ell^{4,\infty}(\Z^{2n})$ then the function $m$ given by~\eqref{E:form} belongs to the Lorentz-Sobolev space $L^{4,\infty}_s(\R^{2n})$ for some $s>\frac{n}{2}$. To this end, we fix an even integer $s>\frac{n}{2}$ and observe that
\begin{align*}
\|m\|_{L^{4,\infty}_s(\R^{2n})}
&=\|(I-\Delta)^{\frac{s}{2}}m\|_{L^{4,\infty}(\R^{2n})}\\
&\lesssim \sup_{|\alpha|\leq s} \|\partial^\alpha m\|_{L^{4,\infty}(\R^{2n})}
\lesssim \|c\|_{\ell^{4,\infty}(\Z^{2n})}.
\end{align*}
Applying Theorem~\ref{T:sobolev}, we obtain the first part of the proposition. 

Let us now focus on the second part of the proposition. The assumption $d\notin \ell^{4,\infty}(\Z^{2n})$ tells us that 
$$
\sup_{t>0} t^{\frac{1}{4}} d^*(t)=\infty.
$$
Since $d$ is a bounded sequence, we have
$$
\sup_{t\in (0,4^{2n})} t^{\frac{1}{4}} d^*(t)<\infty,
$$
and thus
$$
\sup_{t\in [4^{2n},\infty)} t^{\frac{1}{4}} d^*(t)=\infty.
$$
Furthermore, thanks to the monotonicity of the function $d^*$, the last equality implies
$$
\sup_{N\in \N} N^{\frac{n}{2}} d^*((4N)^{2n}) =\infty. 
$$
Therefore, we can find an increasing sequence $\{b_K\}_{K\in \N}$ of positive integers satisfying $b_{K+1} >2b_K$ for every $K\in \N$ and
$$
\lim_{K\to \infty} b_K^{\frac{n}{2}} d^*((4b_K)^{2n}) =\infty.
$$
We set $\rho_K=(4b_K)^{2n}$, $K\in \N$. Then 
\begin{equation}\label{E:lim}
\lim_{K\to \infty} \rho_K^{\frac{1}{4}} d^*(\rho_K) = \infty.
\end{equation}

We denote by $\{a_{l}(t)\}_{l\in \Z^n}$ the sequence of Rademacher functions indexed by the elements of the countable set $\Z^n$. For any given $t\in [0,1]$ we define the function
$$
m_t(\xi,\eta)=\sum_{(j,k)\in \Z^{2n}} a_{j+k}(t) d_{j,k} \Psi(\xi-j, \eta-k), \quad (\xi, \eta) \in \R^n \times \R^n.
$$
Further, let $\varphi$ be a Schwartz function on $\R^n$ whose Fourier transform is supported in the set $\{\xi \in \R^n: ~ |\xi|_\infty<\frac{1}{5}\}$ and which satisfies $\wh{\varphi}(\xi)=1$ if $|\xi|_\infty \leq \frac{1}{10}$. Given any $K\in \N$, we denote $I_K=\{b_K, b_K+1, \dots, 2b_K-1\}$ and define $f_K=g_K$ to be the functions on $\R^n$ whose Fourier transform satisfies
$$
\wh{f_K}(\xi) 
=\sum_{j\in I_K^n} \wh{\varphi}(\xi-j), \quad \xi \in \R^n.
$$
Then
$$
m_t(\xi,\eta) \wh{f_K}(\xi) \wh{g_K}(\eta) 
=\sum_{j\in I_K^n} \sum_{k\in I_K^n} a_{j+k}(t) d_{j,k} \Psi(\xi-j,\eta-k).
$$
This yields
\begin{align*}
T_{m_t}(f_K,g_K)(x)
&=(\mathcal F^{-1}\Psi) (x,x) \sum_{j\in I_K^n} \sum_{k\in I_K^n} a_{j+k}(t) d_{j,k} e^{2\pi i x \cdot (j+k)}\\
&=(\mathcal F^{-1}\Psi) (x,x)\sum_{l\in I_K^n+I_K^n} a_l(t) e^{2\pi i x \cdot l} \sum_{j\in I_K^n: ~l-j \in I_K^n} d_{j,l-j}
\end{align*}
for $x\in \R^n$.
By Fubini's theorem and Khintchine's inequality,
\begin{align}\label{E:rho}
\int_0^1 &\|T_{m_t}(f_K,g_K)\|_{L^1(\R^n)}\,dt
=\int_{\R^n} \int_0^1 |T_{m_t}(f_K,g_K)(x)|\,dt\,dx\\
\nonumber
&\approx \int_{\R^n} |(\mathcal F^{-1}\Psi) (x,x)| \,dx \big(\sum_{l\in I_K^n + I_K^n} \big|\sum_{j\in I_K^n: ~l-j \in I_K^n} d_{j,l-j} \big|^2 \big)^{\frac{1}{2}}\\
\nonumber
&\gtrsim d^*((4b_K)^{2n}) \big(\sum_{l\in I_K^n + I_K^n} \card\{j\in I_K^n: ~l-j \in I_K^n\}^2 \big)^{\frac{1}{2}}\\
\nonumber
&\gtrsim d^*(\rho_K) (b_K)^{\frac{3n}{2}}
\gtrsim \rho_K^{\frac{3}{4}} d^*(\rho_K).
\end{align}
Here, we have used that $d_{j,k}\geq d^*((4b_K)^{2n})$ if $(j,k)\in I_K^{n} \times I_K^n$, a fact which follows from the monotonicity assumption~\eqref{E:monotonicity_c}. Estimate~\eqref{E:rho} now yields that there is a sequence $\{t_K\}_{K\in \N}$ of numbers in $[0,1]$ such that
$$
\|T_{m_{t_K}}(f_K,g_K)\|_{L^1(\R^n)} \gtrsim \rho_K^{\frac{3}{4}} d^*(\rho_K).
$$
Let us define the sequence $c$ as
$$
c_{j,k}=
\begin{cases}
a_{j+k}(t_K) d_{j,k} & \text{if } (j,k)\in I_K^n \times I_K^n \text{ for some } K\in \N\\
d_{j,k} & \text{if } (j,k) \in \Z^{2n}  \setminus \bigcup_{K\in \N} (I_K^n \times I_K^n).
\end{cases}
$$
We notice that this definition is correct since the sets $I_K$ are pairwise disjoint. 

Let $m$ be the function given by~\eqref{E:form}. Using the support properties of $f_K$ and $g_K$, it is not difficult to observe that
$$
T_m(f_K,g_K) = T_{m_{t_K}}(f_K,g_K)
$$
for every $K\in \N$. Consequently,
$$
\|T_{m}(f_K,g_K)\|_{L^1(\R^n)} \gtrsim \rho_K^{\frac{3}{4}} d^*(\rho_K).
$$
Assume, for the sake of contradiction, that $T_m$ is bounded from $L^2(\R^n) \times L^2(\R^n)$ to $L^1(\R^n)$. Then
$$
\rho_K^{\frac{3}{4}} d^*(\rho_K)
\lesssim \|f_K\|_{L^2(\R^n)} \|g_K\|_{L^2(\R^n)}
\lesssim \rho_K^{\frac{1}{2}}.
$$
This yields
$$
\sup_{k\in \N}\rho_K^{\frac{1}{4}} d^*(\rho_K) <\infty,
$$
which contradicts~\eqref{E:lim}. The proof is complete.
\end{proof}

Having Proposition~\ref{P:bumps} at our disposal, we can now prove Theorem~\ref{T:sharpness}.

\begin{proof}[Proof of Theorem~\ref{T:sharpness}]
Assume that $\Psi$ is a function as in Proposition~\ref{P:bumps} which satisfies, in addition, the pointwise estimate $|(I-\Delta)^{\frac{s}{2}}\Psi| \leq 1$. Also, let
$m$ be any function of the form~\eqref{E:form}. Since $s\in 2\N$, we have that all functions of the form $(I-\Delta)^{\frac{s}{2}} [\Psi(\xi-k, \eta-l)]$ for some $(k,l)\in \Z^{2n}$ are compactly supported in a set of measure less than $1$ and their supports are pairwise disjoint in $k$ and $l$. Therefore, 
\begin{align}\label{E:measure}
&|\{x\in \R^{2n}: ~|(I-\Delta)^{\frac{s}{2}}m(x)|>\lambda\}|\\ 
&\leq \card\{(k,l) \in \Z^{2n}: ~|c_{k,l}|>\lambda\}, \quad \lambda \in (0,1). \nonumber
\end{align}

Let $\tau(\lambda)$ be the right-continuous function on $(0,1)$ that is equal to $\lfloor \mu(\lambda) \rfloor$ for a.e.\ $\lambda \in (0,1)$. Since the function $\tau$ is right-continuous, non-increasing and has values in $\Z^+_0$, there is a sequence $d=\{d_{k,l}\}_{(k,l)\in \Z^{2n}}$ of non-negative numbers satisfying the monotonicity assumption~\eqref{E:monotonicity_c} for which 
\begin{equation}\label{E:tau}
\card\{(k,l)\in \Z^{2n}: ~d_{k,l} >\lambda \} = \tau(\lambda), \quad \lambda \in (0,1).
\end{equation}
Since $\sup_{\lambda \in (0,1)} \lambda (\mu(\lambda))^{\frac{1}{4}} =\infty$ and $\mu$ is non-increasing, we necessarily have $\lim_{\lambda \to 0_+} \mu(\lambda)=\infty$. Therefore, $\tau(\lambda)$ is comparable to $\mu(\lambda)$ for all but countably many $\lambda \in (0,\lambda_0)$, with $\lambda_0>0$ sufficiently small. Consequently,
$$
\sup_{\lambda \in (0,\lambda_0)} \lambda (\tau(\lambda))^{\frac{1}{4}}
\gtrsim \sup_{\lambda \in (0,\lambda_0)} \lambda (\mu(\lambda))^{\frac{1}{4}}
=\infty,
$$
since
$$
\sup_{\lambda \in [\lambda_0, 1)} \lambda (\mu(\lambda))^{\frac{1}{4}} \leq (\mu(\lambda_0))^{\frac{1}{4}}<\infty.
$$
This implies that $d$ does not belong to $\ell^{4,\infty}(\Z^{2n})$, and so, owing to Proposition~\ref{P:bumps}, there is a sequence $\{c_{k,l}\}_{(k,l)\in \Z^{2n}}$ of real numbers such that $|c_{k,l}|=d_{k,l}$ for all $(k,l)\in \Z^{2n}$ and for which the associated operator $T_m$, with $m$ defined by~\eqref{E:form}, is unbounded from $L^2(\R^n) \times L^2(\R^n)$ to $L^1(\R^n)$. In addition, using~\eqref{E:measure} and~\eqref{E:tau}, we deduce that
$$
|\{x\in \R^{2n}: ~|(I-\Delta)^{\frac{s}{2}}m(x)|>\lambda\}| 
\leq \tau(\lambda) \leq \mu(\lambda), \quad \lambda \in (0,1),
$$
as desired.
\end{proof}

\section{Proof of Theorem~\ref{T:q=4}}\label{S:q=4}

In this section we prove Theorem~\ref{T:q=4}. To this end, 
for every $N \in 2\N$ we consider the ``interval'' 
$$
I_N=\{b_N, b_N+1,\dots, b_N+2^{N^2+N/2}-1\},
$$ 
where $\{b_N\}_{N\in 2\N}$ is an increasing sequence of integers to be specified later. 
We also set
$$
S_N=I_N^n \subseteq \Z^n.
$$

\begin{lm}\label{L:S_N}
We have
$$
\sum_{l\in S_N +S_N} \left(\operatorname{card}\{j\in S_N: ~l-j \in S_N\}\right)^2 \gtrsim 2^{3n(N^2+\frac{N}{2})}.
$$
\end{lm}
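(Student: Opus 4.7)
The plan is to recognize the expression inside the sum as the value at $l$ of the convolution $r := \mathbf{1}_{S_N} \ast \mathbf{1}_{S_N}$ on $\Z^n$ (where the convolution is taken with respect to counting measure and a sign convention that makes $r(l) = \operatorname{card}\{j \in S_N : l-j \in S_N\}$). Then the target becomes the discrete $\ell^2$-norm squared of $r$, and the cheapest way to bound this from below is a straight Cauchy--Schwarz estimate against the characteristic function of the support of $r$, which is $S_N + S_N$.

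First I would note the two facts that drive the estimate. Since $S_N = I_N^n$, the support satisfies
$$
S_N + S_N = (I_N + I_N)^n, \qquad |S_N + S_N| = (2|I_N|-1)^n \leq 2^n |I_N|^n.
$$
On the other hand, the total mass of $r$ is
$$
\sum_{l \in \Z^n} r(l) = \sum_{j \in S_N} \sum_{k \in S_N} 1 = |S_N|^2 = |I_N|^{2n}.
$$

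Now I would apply Cauchy--Schwarz in the form
$$
|I_N|^{2n} = \sum_{l \in S_N + S_N} r(l) \leq \left( \sum_{l \in S_N+S_N} r(l)^2 \right)^{\frac{1}{2}} |S_N+S_N|^{\frac{1}{2}},
$$
which rearranges to
$$
\sum_{l \in S_N + S_N} r(l)^2 \geq \frac{|I_N|^{4n}}{|S_N + S_N|} \geq 2^{-n} |I_N|^{3n}.
$$
Finally, recalling that $|I_N| = 2^{N^2 + N/2}$, this lower bound is exactly $2^{-n} \cdot 2^{3n(N^2 + N/2)}$, which is the asserted inequality with an implicit constant depending only on $n$.

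I do not expect any real obstacle here: the only place where one might be tempted to do more work is the upper bound on $|S_N + S_N|$, which is immediate from the product structure of $S_N$. If one preferred an explicit (constructive) argument instead of Cauchy--Schwarz, one could observe that for every $l$ in the ``middle'' box of $S_N + S_N$ of side-length $|I_N|$, the one-dimensional count $\operatorname{card}\{j \in I_N : l_i - j \in I_N\}$ is at least $|I_N|/2$ in each coordinate $i$, giving $r(l) \gtrsim |I_N|^n$ on a set of $\gtrsim |I_N|^n$ values of $l$; summing yields the same $|I_N|^{3n}$ lower bound. Either route is short.
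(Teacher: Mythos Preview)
Your proof is correct. The Cauchy--Schwarz argument you give as the main route is not what the paper does: the paper instead takes exactly your ``alternative'' approach, restricting to a middle sub-box $K_N^n \subseteq S_N + S_N$ of side-length $|I_N|/2$, checking that $r(l) \gtrsim |I_N|^n$ for each $l$ there, and summing. Your Cauchy--Schwarz route is a bit slicker in that it never requires isolating a region where $r$ is large --- it leverages only the total mass $\sum_l r(l) = |S_N|^2$ and the support bound $|S_N+S_N| \lesssim |I_N|^n$ --- whereas the paper's argument is more hands-on but gives pointwise information about $r$. Both yield the same $|I_N|^{3n}$ lower bound with implicit constants depending only on $n$.
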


\begin{proof}
We set
$$
J_N=I_N+I_N=\{2b_N, 2b_N+1,\dots, 2b_N+2^{N^2+N/2+1}-2\}
$$
and observe that $S_N+S_N=J_N^n$. Consider the set $K_N$ defined as
$$
K_N=\{2b_N+2^{N^2+N/2-1}, 2b_N+2^{N^2+N/2-1}+1,\dots, 2b_N+2^{N^2+N/2}-1\}.
$$
Then $K_N^n \subseteq J_N^n$. Now, if $l=(l_1,\dots,l_n)\in K_N^n$ and $j=(j_1, \dots, j_n)$, then $j$ satisfies $j\in S_N$ and $l-j\in S_N$ if and only if $b_N \leq j_i \leq l_i-b_N$ for every $i=1,\dots, n$. Therefore,
$$
\card \{j\in S_N:~ l-j \in S_N\} \geq \prod_{i=1}^n (l_i-2b_N+1) \gtrsim 2^{n(N^2+N/2)}.
$$
Altogether,
\begin{align*}
&\sum_{l\in S_N +S_N} \left(\operatorname{card}\{j\in S_N: ~l-j \in S_N\}\right)^2\\
&\geq \sum_{l\in K_N^n} \left(\operatorname{card}\{j\in S_N: ~l-j \in S_N\}\right)^2 \\
&\gtrsim \sum_{l\in K_N^n} 2^{2n(N^2+N/2)}
\approx 2^{3n(N^2+N/2)}.
\end{align*}
\end{proof}

\begin{proof}[Proof of Theorem~\ref{T:q=4}]
Let $\psi$ be a smooth function on $\R^n$ supported in the set $\{\xi \in \R^n: ~|\xi|<1/10\}$ such that $0\leq \psi \leq 1$ and $\psi(\xi)=1$ if $|\xi| \leq 1/20$. Let $\{a_l(t)\}_{l\in \Z^n}$ be the sequence of Rademacher functions indexed by the elements of the countable set $\Z^n$. For a given $t\in [0,1]$ we define
\begin{align*}
m_t(\xi,\eta)
&=\sum_{N\in 2\N} 2^{-\frac{nN^2}{2}} \sum_{j\in S_N} \sum_{k\in S_N} a_{j+k}(t) \psi(2^N\xi-j) \psi(2^N\eta-k)\\
&=\sum_{N\in 2\N} F_{t,N}(\xi,\eta),  \quad (\xi, \eta) \in \R^n \times \R^n,
\end{align*}
where the sequence $\{b_N\}_{N\in 2\N}$ appearing in the definition of the sets $I_N$ and $S_N$ is chosen in such a way that the supports of the functions $F_{t,N}$ are pairwise disjoint in $N$. 
Also, let $\varphi$ be a Schwartz function on $\R^n$ whose Fourier transform is supported in the set $\{\xi \in \R^n: ~|\xi|<1/20\}$. For any $N \in 2\N$ we consider the functions $f_N=g_N$ whose Fourier transform satisfies 
$$
\wh{f_N}(\xi)=2^{\frac{nN}{4}-\frac{nN^2}{2}} \sum_{j\in S_N} \wh{\varphi}(2^N \xi-j).
$$
Then, by Plancherel's theorem,
\begin{equation}\label{E:l2}
\|f_N\|^2_{L^2(\R^n)} = \|g_N\|^2_{L^2(\R^n)} \approx 2^{\frac{nN}{2}-nN^2} 2^{-nN} \card{S_N} =1.
\end{equation}

We have
$$
m_t(\xi,\eta) \wh{f_N}(\xi) \wh{g_N}(\eta)
=2^{\frac{nN}{2}-\frac{3nN^2}{2}} \sum_{j\in S_N} \sum_{k\in S_N} a_{j+k}(t) \wh{\varphi}(2^N \xi-j) \wh{\varphi}(2^N \eta-k),
$$
and so
\begin{align*}
&T_{m_t}(f_N,g_N)(x)\\
&=2^{\frac{nN}{2}-\frac{3nN^2}{2}} \sum_{j\in S_N} \sum_{k\in S_N} a_{j+k}(t) 2^{-2nN} \left(\varphi \left(\frac{x}{2^N} \right)\right)^2 e^{2\pi ix \cdot \frac{j+k}{2^N}}\\
&=2^{-\frac{3nN}{2}-\frac{3nN^2}{2}} \left(\varphi \left(\frac{x}{2^N} \right)\right)^2 \sum_{l\in S_N+S_N} a_l(t) e^{2\pi ix \cdot \frac{l}{2^N}} \card \{j\in S_N:~ l-j \in S_N\}.
\end{align*}
By Fubini's theorem and Khintchine's inequality,
\begin{align*}
&\int_0^1 \|T_{m_t}(f_N,g_N)\|_{L^1(\R^n)}\,dt
= \int_{\R^n} \int_0^1 |T_{m_t}(f_N,g_N)(x)|\,dt\,dx\\
&\approx 2^{-\frac{3nN}{2}-\frac{3nN^2}{2}} \int_{\R^n} \left(\varphi \left(\frac{x}{2^N} \right)\right)^2 \left(\sum_{l\in S_N+S_N} (\card \{j\in S_N:~ l-j \in S_N\})^2\right)^{\frac{1}{2}}\,dx\\
&\approx 2^{-\frac{nN}{2}-\frac{3nN^2}{2}} \left(\sum_{l\in S_N+S_N} (\card \{j\in S_N:~ l-j \in S_N\})^2\right)^{\frac{1}{2}}\\
&\gtrsim 2^{\frac{nN}{4}},
\end{align*}
where the last inequality follows from Lemma~\ref{L:S_N}. Therefore, there is $t_N\in [0,1]$ such that 
$$
\|T_{m_{t_N}}(f_N,g_N)\|_{L^1(\R^n)} \gtrsim 2^{\frac{nN}{4}}.
$$

Let $m$ be the function defined as
\begin{equation}\label{E:m}
m=\sum_{N\in 2\N} F_{t_N,N}.
\end{equation}
Then $T_m(f_N, g_N)=T_{m_{t_N}}(f_N,g_N)$ for every $N \in 2\N$, and so
\begin{equation}\label{E:norm}
\|T_m(f_N, g_N)\|_{L^1(\R^n)} \gtrsim 2^{\frac{nN}{4}}.
\end{equation}
A combination of~\eqref{E:l2} and~\eqref{E:norm} thus yields that $T_m$ is not bounded from $L^2(\R^n) \times L^2(\R^n)$ to $L^1(\R^n)$. 

Since $\psi$ is a smooth function and the supports of $\psi(2^N \xi-j)$ are pairwise disjoint in $(N,j)$, we deduce that $m$ is a smooth function. 
To verify that $m$ has bounded partial derivatives of all orders, let us fix a multiindex $\alpha$. Since the functions $F_{{t_N},N}$ have pairwise disjoint supports in $N$, it is enough to verify that the functions $\partial^{\alpha} F_{t_N,N}$ are pointwise bounded by a constant independent of $N$. This is indeed true, as
$$
N|\alpha| = \sqrt{n} N \cdot \frac{|\alpha|}{\sqrt{n}}\leq \frac{nN^2}{2}+\frac{|\alpha|^2}{2n},
$$
which implies
$$
|\partial^{\alpha} F_{t_N,N}(\xi,\eta)|\leq C_{\alpha,\psi} 2^{-\frac{nN^2}{2}+N|\alpha|}
\leq C_{\alpha,\psi} 2^{\frac{|\alpha|^2}{2n}}, \quad (\xi,\eta) \in \R^n \times \R^n.
$$

Finally, let us show that $m$ belongs to $L^4(\R^{2n})$. We have
$$
\|F_{t_N,N}\|^4_{L^4(\R^{2n})} \lesssim 2^{-2nN^2} 2^{-2nN} (\card S_N)^2
\approx 2^{-nN},
$$
and so
$$
\|m\|_{L^4(\R^{2n})}\leq \sum_{N\in 2\N} \|F_{t_N,N}\|_{L^4(\R^{2n})} \lesssim \sum_{N\in 2\N} 2^{-\frac{nN}{4}}<\infty,
$$
as desired.
\end{proof}

\begin{rmk}
As mentioned in Section~\ref{S:introduction}, the function $m$ from the proof of Theorem~\ref{T:q=4} not only belongs to $L^4(\R^{2n})$, but also satisfies the estimate
$$
|\{(\xi,\eta)\in \R^{2n}:~ |m(\xi,\eta)| >\lambda\}|\lesssim \lambda^{-4} \log^{-\alpha}(e/\lambda), \quad \lambda \in (0,1),
$$
for any $\alpha>0$, up to multiplicative constants depending on $\alpha$ and $n$. To verify this, we recall that $m$ is of the form~\eqref{E:m}, where the functions $F_{t_N,N}$ have pairwise disjoint supports in $N \in 2\N$ and satisfy
\begin{align*}
&|\{(\xi,\eta) \in \R^{2n}: ~ |F_{t_N,N}(\xi,\eta)|>\lambda\}|\\
&\lesssim
\begin{cases}
(\card S_N)^2 2^{-2nN} \lesssim 2^{2nN^2-nN} & \text{if } \lambda \in (0,2^{-\frac{nN^2}{2}}),\\
0 & \text{if } \lambda \geq 2^{-\frac{nN^2}{2}}.
\end{cases}
\end{align*}
Therefore,
\begin{align*}
&|\{(\xi,\eta)\in \R^{2n}:~ |m(\xi,\eta)| >\lambda\}|\\
&= \sum_{N\in 2\N} |\{(\xi,\eta) \in \R^{2n}: ~ |F_{t_N,N}(\xi,\eta)|>\lambda\}|\\
&\lesssim
\begin{cases}
\sum_{\substack{N\in 2\N\\ N\leq K}} 2^{2nN^2-nN} \approx 2^{2nK^2-nK} & \text{if } \lambda \in [2^{-\frac{n(K+2)^2}{2}}, 2^{-\frac{nK^2}{2}})\\ & \text{for some } K\in 2\N;\\
0 & \text{if } \lambda \geq 2^{-2n}.
\end{cases}
\end{align*}
We observe that whenever $K\in 2\N$ and $\lambda \in [2^{-\frac{n(K+2)^2}{2}}, 2^{-\frac{nK^2}{2}})$ then
$$
\lambda^{-4} \log^{-\alpha}(e/\lambda) \gtrsim 2^{2nK^2} K^{-2\alpha} \gtrsim 2^{2nK^2-nK},
$$
which yields the desired conclusion.
\end{rmk}

\section{Comparison of Theorem~\ref{T:sobolev} with other multiplier theorems}\label{S:comparison}

In the first part of this section we show that each multiplier satisfying the assumptions of~\cite[Theorem 1.3]{GHS} falls under the scope of Theorem~\ref{T:sobolev} as well.
More precisely, it follows from the estimates below that whenever $m$ is a function belonging to $L^q(\R^{2n})$ for some $1<q<4$ whose partial derivatives up to the order $\lfloor \frac{2n}{4-q} \rfloor +1$ are bounded, then $m\in L^{4}_s(\R^{2n})$ for some $s>\frac{n}{2}$. Thanks to the embedding $L^4_s(\R^{2n}) \hookrightarrow L^{4,\infty}_s(\R^{2n})$, this yields that $m$ satisfies the assumptions of Theorem~\ref{T:sobolev}. The proof of this comparison result is based on a fractional variant of the classical Gagliardo-Nirenberg interpolation inequality due to Brezis and Mironescu~\cite[Theorem 1]{BM}. We start with a few preliminaries on fractional Sobolev spaces.

So far, we have worked with fractional Sobolev spaces defined via the Fourier transform. Let us now introduce another variant of these spaces. 
For any $s>0$ and $1\leq p\leq \infty$, we set $k=\lfloor s \rfloor$ and define the functional
$$
|f|_{W^{s,p}(\R^{2n})}=
\begin{cases}
\|D^k f\|_{L^p(\R^{2n})} & \text{if } s\in \N \\
\int_{\R^{2n}} \int_{\R^{2n}} \frac{|D^k f(x)-D^k f(y)|^p}{|x-y|^{2n+(s-k)p}}\,dx\,dy & \text{if } s\notin \N \text{ and } p<\infty\\
\sup_{x\neq y} \frac{|D^k f(x)-D^k f(y)|}{|x-y|^{s-k}} &\text{if } s\notin \N \text{ and } p=\infty.
\end{cases}
$$ 
The fractional Sobolev space $W^{s,p}(\R^{2n})$ is then defined as the collection of all $k$-times weakly differentiable functions $f$ on $\R^{2n}$ satisfying
$$
\|f\|_{W^{s,p}(\R^{2n})} =\|f\|_{L^p(\R^{2n})} + |f|_{W^{s,p}(\R^{2n})} <\infty.
$$

In general, the space $W^{s,p}(\R^{2n})$ does not coincide with $L^p_s(\R^{2n})$ 
but we have the chain of embeddings
\begin{equation}\label{E:w_l}
W^{s_2,p}(\R^{2n}) \hookrightarrow L^p_s(\R^{2n}) \hookrightarrow W^{s_1,p}(\R^{2n}), 
\end{equation}
where $0<s_1<s<s_2$ and $1<p<\infty$.
The first embedding in~\eqref{E:w_l} thus implies, via Theorem~\ref{T:sobolev} and the embedding $L^4_s(\R^{2n}) \hookrightarrow L^{4,\infty}_s(\R^{2n})$, that inequality 
\begin{equation}\label{E:gagliardo}
\|T_m(f,g)\|_{L^1(\R^n)} \leq C \|m\|_{W^{s,4}(\R^{2n})} \|f\|_{L^2(\R^n)} \|g\|_{L^2(\R^n)}
\end{equation}
is satisfied if $s>n/2$. 

It follows from~\cite[Theorem 1]{BM} that
\begin{equation}\label{E:nirenberg}
\|m\|_{W^{s,4}(\R^{2n})} \lesssim \|m\|_{L^q(\R^{2n})}^{\frac{q}{4}} \|m\|_{W^{\tilde{s},\infty}(\R^{2n})}^{1-\frac{q}{4}}
\end{equation}
holds whenever $s>0$, $1<q<4$ and $\tilde{s}=\frac{4s}{4-q}$. A combination of~\eqref{E:gagliardo} and~\eqref{E:nirenberg} then yields the following fractional variant of~\cite[Theorem 1.3]{GHS}.

\begin{cor}
Let $1<q<4$ and $s>\frac{2n}{4-q}$. Let $m$ be a function on $\R^{2n}$ belonging to $L^q(\R^{2n}) \cap W^{s,\infty}(\R^{2n})$. Then the associated operator $T_m$ admits a bounded extension from $L^2(\R^n) \times L^2(\R^n)$ to $L^1(\R^n)$ and
$$
\|T_m(f,g)\|_{L^1(\R^n)} \leq C \|m\|_{L^q(\R^{2n})}^{\frac{q}{4}} \|m\|_{W^{s,\infty}(\R^{2n})}^{1-\frac{q}{4}} \|f\|_{L^2(\R^n)} \|g\|_{L^2(\R^n)}.
$$
\end{cor}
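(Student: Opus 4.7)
My plan is to combine the two inequalities~\eqref{E:gagliardo} and~\eqref{E:nirenberg} displayed just above the statement of the corollary. The only substantive choice is that of an intermediate smoothness parameter that makes the scaling align: writing~\eqref{E:nirenberg} with smoothness $s_0$ on the left-hand side forces the smoothness on the right-hand side to be $\tilde s_0 = \tfrac{4s_0}{4-q}$, so I will take
$$
s_0 = \frac{s(4-q)}{4}.
$$
The hypothesis $s > \tfrac{2n}{4-q}$ then translates exactly to $s_0 > \tfrac{n}{2}$, which is precisely the condition needed for~\eqref{E:gagliardo} to apply.

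Given this choice, I first invoke~\eqref{E:gagliardo} at smoothness level $s_0$ to obtain
$$
\|T_m(f,g)\|_{L^1(\R^n)} \lesssim \|m\|_{W^{s_0,4}(\R^{2n})} \|f\|_{L^2(\R^n)} \|g\|_{L^2(\R^n)}.
$$
Next, I apply the Brezis--Mironescu fractional Gagliardo--Nirenberg inequality~\eqref{E:nirenberg} with parameters $s_0$, $q$ and $\tilde s_0 = s$ to obtain
$$
\|m\|_{W^{s_0,4}(\R^{2n})} \lesssim \|m\|_{L^q(\R^{2n})}^{q/4} \|m\|_{W^{s,\infty}(\R^{2n})}^{1-q/4}.
$$
Concatenating the two displays yields exactly the estimate asserted in the corollary.

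There is essentially no obstacle here: the argument is a bookkeeping combination of Theorem~\ref{T:sobolev} (delivered through~\eqref{E:gagliardo}) with the Brezis--Mironescu inequality. In fact, the corollary can be read as saying that the threshold $s > \tfrac{2n}{4-q}$ here is the image, under the Brezis--Mironescu scaling $\tilde s = \tfrac{4s_0}{4-q}$, of the critical threshold $s_0 > \tfrac{n}{2}$ appearing in Theorem~\ref{T:sobolev}.
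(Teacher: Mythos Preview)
Your proposal is correct and is exactly the paper's approach: the corollary is stated immediately after~\eqref{E:gagliardo} and~\eqref{E:nirenberg} with the single sentence ``A combination of~\eqref{E:gagliardo} and~\eqref{E:nirenberg} then yields the following fractional variant of~\cite[Theorem 1.3]{GHS}.'' Your choice $s_0 = s(4-q)/4$ merely makes explicit the bookkeeping the paper leaves implicit, namely that the $s$ in the corollary plays the role of $\tilde s$ in~\eqref{E:nirenberg} while the $s$ in~\eqref{E:gagliardo} and~\eqref{E:nirenberg} is your $s_0$.
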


We finish the paper by briefly commenting on the relationship of Theorem~\ref{T:sobolev} with two other bilinear multiplier theorems. The first one is \cite[Theorem 1.1]{GHS}, which has direct applications to the problem of boundedness of rough bilinear singular integral operators. This theorem gives a sufficient condition for the $L^2(\R^n) \times L^2(\R^n) \to L^1(\R^n)$ boundedness of the operator
\begin{equation}\label{E:sum_operator}
T_{m}^{\Sigma}=\sum_{k\in \Z} T_{m(2^k\cdot)},
\end{equation}
expressed in terms of the membership of the function $m$ (satisfying certain regularity assumptions) into the space $L^q(\R^{2n})$. 
It is shown in~\cite{GHS} that this theorem holds if $q<4$ and fails in the limiting case $q=4$. We emphasize that the counterexample provided there involves test functions that belong to the space $L^4(\R^{2n})$ together with all its partial derivatives. Therefore, the statement of our Theorem~\ref{T:sobolev} is not valid if the operator $T_m$ is replaced by the operator $T_{m}^{\Sigma}$, and we are thus not aware of any direct applications of Theorem~\ref{T:sobolev} to the problem of boundedness of rough bilinear singular integral operators. Finding an analogue of Theorem~\ref{T:sobolev} for the operator $T_{m}^{\Sigma}$ remains an open problem.

The second theorem we would like to focus on is the bilinear variant of the classical Mikhlin-H\"ormander multiplier theorem. Its initial version was obtained by Coifman and Meyer~\cite{CM}, and the topic was investigated further by various authors~\cite{FT,GHH2,GMNT,GMT,GN,GP,GS,MT3,MT2,P,T}. In particular, it is known (see~\cite[Theorem 1]{GHH2}) that whenever
\begin{equation}\label{E:hormander}
\sup_{k\in \Z} \|m(2^k \cdot) \phi\|_{L^4_s(\R^{2n})}<\infty
\end{equation}
holds for some $s>n/2$ then $T_m$ admits a bounded extension from $L^2(\R^n) \times L^2(\R^n)$ to $L^1(\R^n)$.
Here, $\phi$ is a smooth function supported in the annulus $\{\xi\in \R^{2n}:~ 1/2 <|\xi| <2\}$ and satisfying $\sum_{k\in \Z} \phi(2^k \cdot) =1$. Despite certain similarity of this result with the statement of Theorem~\ref{T:sobolev}, which involves the slightly larger Sobolev space $L^{4,\infty}_s(\R^{2n})$ instead of $L^4_s(\R^{2n})$, it turns out that these two multiplier theorems are not comparable. In fact,~\cite[Theorem 1]{GHH2} is incomparable also with the (weaker) variant of Theorem~\ref{T:sobolev} involving the Sobolev space $L^4_s(\R^{2n})$. 
Clearly, any nontrivial constant function satisfies the assumptions of~\cite[Theorem 1]{GHH2} but not of Theorem~\ref{T:sobolev}. On the other hand, \cite[Theorem 1]{GHH2} imposes stronger assumptions on the decay of derivatives of the multiplier near infinity than Theorem~\ref{T:sobolev}. A precise formulation of this fact is the content of the proposition below. It follows from that proposition that if a function of the form~\eqref{E:form} satisfies the assumptions of~\cite[Theorem 1]{GHH2} then the corresponding sequence $c$ necessarily belongs to $\ell^q(\Z^{2n})$ for some $q<4$. Clearly, this requirement is essentially stronger than requiring $c\in \ell^{4}(\Z^{2n})$, which is sufficient for the variant of Theorem~\ref{T:sobolev} involving the Sobolev space $L^4_s(\R^{2n})$. 


We recall that, for any $s>0$, the operator $(-\Delta)^{\frac{s}{2}}$ is defined via the Fourier transform as
$$
[(-\Delta)^{\frac{s}{2}} f]^{\wh{ ~}} (\xi) = (4\pi^2 |\xi|^2)^{\frac{s}{2}} \wh{f}(\xi), \quad \xi \in \R^{2n}.
$$

\begin{prop}\label{P:improving} 
Assume that $m$ is a function on $\R^{2n}$ supported away from the origin and satisfying condition~\eqref{E:hormander} for some $s>n/2$.
Then 
\begin{equation}\label{E:q}
\|(-\Delta)^{\frac{s}{2}} m\|_{L^q(\R^{2n})}<\infty 
\end{equation}
for every $q\in (\max\{\frac{2n}{s},1\},4)$. 
\end{prop}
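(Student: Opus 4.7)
The strategy is a dyadic decomposition of $m$ combined with the scaling properties of $(-\Delta)^{s/2}$. With $\phi$ the bump from \eqref{E:hormander}, write $m=\sum_{j\in\Z} m_j$, where $m_j(\xi)=m(\xi)\phi(2^j\xi)$ is supported in the annulus $\{2^{-j-1}<|\xi|<2^{-j+1}\}$. Since $m$ vanishes near the origin, only $j\leq J_0$ contribute to the sum, for some $J_0\in\Z$. Setting $g_{-j}(\xi)=m(2^{-j}\xi)\phi(\xi)$, hypothesis~\eqref{E:hormander} gives the uniform bound $\|g_{-j}\|_{L^4_s(\R^{2n})}\leq C$, and one has the identity $m_j(\xi)=g_{-j}(2^j\xi)$.

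From the homogeneity of $(-\Delta)^{s/2}$ and the $L^q$ change of variables,
\[
\|(-\Delta)^{s/2} m_j\|_{L^q(\R^{2n})}=2^{j(s-2n/q)}\,\|(-\Delta)^{s/2} g_{-j}\|_{L^q(\R^{2n})}.
\]
Summing in $j$ via the triangle inequality, the proof reduces to two points: (a) a uniform bound $\|(-\Delta)^{s/2} g_{-j}\|_{L^q(\R^{2n})}\leq C$ for $q\in(1,4]$, and (b) convergence of the geometric series $\sum_{j\leq J_0}2^{j(s-2n/q)}$, which is equivalent to $s-2n/q>0$, i.e.\ $q>2n/s$.

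For (a), split $\R^{2n}=A\cup A^c$ with $A=\{|\xi|\leq 10\}$. The operator $(-\Delta)^{s/2}(I-\Delta)^{-s/2}$ is a Fourier multiplier with symbol $\sigma(y)=(4\pi^2|y|^2)^{s/2}(1+4\pi^2|y|^2)^{-s/2}$, which is smooth off the origin and satisfies $|\partial^\alpha \sigma(y)|\lesssim|y|^{-|\alpha|}$ for every multiindex $\alpha$ (near $0$ one uses $|y|^s\leq 1$, and near $\infty$ one uses $\sigma=1+O(|y|^{-2})$); the Mikhlin--H\"ormander theorem then shows it is bounded on $L^4(\R^{2n})$, so
\[
\|(-\Delta)^{s/2} g_{-j}\|_{L^4(\R^{2n})}\lesssim \|g_{-j}\|_{L^4_s(\R^{2n})}\leq C.
\]
Restricting to the bounded set $A$ and applying H\"older's inequality yields the $L^q(A)$ bound for any $q\leq 4$. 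On $A^c$, since $g_{-j}$ is supported in the annulus $\{1/2\leq|\xi|\leq 2\}$, the nonlocal integral representation of $(-\Delta)^{s/2}$ (with the Taylor subtraction of order $\lfloor s\rfloor$ which vanishes at any $\xi\in A^c$, as $g_{-j}$ and all its derivatives do) reduces to a plain convolution against a kernel comparable to $|\cdot|^{-(2n+s)}$, and $|\xi-\eta|\approx|\xi|$ for $\eta\in\mathrm{supp}(g_{-j})$. This gives
\[
|(-\Delta)^{s/2} g_{-j}(\xi)|\lesssim \|g_{-j}\|_{L^1(\R^{2n})}\,|\xi|^{-(2n+s)}\lesssim |\xi|^{-(2n+s)},\qquad|\xi|>10,
\]
which lies in $L^q(A^c)$ uniformly in $j$ for every $q\geq 1$ (for $s\in 2\N$ this step is trivial since $(-\Delta)^{s/2}$ is local and $(-\Delta)^{s/2} g_{-j}$ is supported in $\{1/2\leq|\xi|\leq 2\}$).

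Combining (a) and (b) gives $\|(-\Delta)^{s/2} m\|_{L^q}\lesssim \sum_{j\leq J_0}2^{j(s-2n/q)}<\infty$ for $q\in(\max\{2n/s,1\},4)$, as claimed. The main technical obstacle is the tail portion of step (a): for non-even $s$ one must carefully justify the collapse of the Taylor-regularized definition of $(-\Delta)^{s/2}$ into a genuine convolution with the Riesz kernel at points outside $\mathrm{supp}(g_{-j})$, and check that the resulting constant depends only on $\|g_{-j}\|_{L^1}$, so that the uniform bound in $j$ survives. Everything else follows from Mikhlin's theorem, H\"older's inequality, and the elementary summation of a geometric series.
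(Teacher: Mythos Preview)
Your argument is correct and shares the same scaffolding as the paper's proof: both decompose $m=\sum_j m_j$ dyadically, invoke the homogeneity of $(-\Delta)^{s/2}$ to reduce matters to a uniform $L^q$ bound for $(-\Delta)^{s/2}g_{-j}$ with $g_{-j}=m(2^{-j}\cdot)\phi$, and then sum the geometric series in $2^{j(s-2n/q)}$. The genuine difference lies in how the uniform $L^q$ estimate is obtained. The paper inserts an auxiliary bump $\Phi$ equal to $1$ on the support of $\phi$ and applies the Kato--Ponce (fractional Leibniz) inequality with the exponent pair $(4,\,4q/(4-q))$; this handles the nonlocal tail of $(-\Delta)^{s/2}$ in one stroke and feeds directly into scaling. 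You instead treat the tail by hand: on the ball $A=\{|\xi|\le 10\}$ you combine the Mikhlin bound for $(-\Delta)^{s/2}(I-\Delta)^{-s/2}$ with H\"older, and on $A^c$ you use that for $\xi$ away from $\mathrm{supp}\,g_{-j}$ the operator reduces to convolution against $C|\cdot|^{-(2n+s)}$, giving the pointwise decay $|\xi|^{-(2n+s)}\|g_{-j}\|_{L^1}$. Your route avoids the Kato--Ponce black box at the cost of a more delicate justification of the kernel representation for non-even $s$ (which you rightly flag); note that this step needs only $g_{-j}\in L^1$ with compact support---both of which follow from $\|g_{-j}\|_{L^4_s}\le C$ and the fixed support---so the dependence on $j$ is indeed uniform. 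The paper's approach is shorter and more robust, while yours is more elementary in its ingredients.
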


\begin{proof}
Since $m$ is supported away from the origin, there is $l\in \Z$ such that $m=0$ on the support of $\phi(2^{-k}\cdot)$ when $k<l$. Therefore,
\begin{equation}\label{E:decomposition}
m=\sum_{k=-\infty}^\infty \phi(2^{-k}\cdot) m
=\sum_{k=l}^\infty \phi(2^{-k}\cdot) m.
\end{equation}
Let $q \in (\max\{\frac{2n}{s},1\},4)$ and let $\Phi$ be a smooth function which is equal to one on the annulus $\{\xi \in \R^{2n}:~1/2 <|\xi|<2\}$ and supported in $\{\xi \in \R^{2n}:~1/4 <|\xi|<4\}$. Using~\eqref{E:decomposition}
and the Kato-Ponce inequality~\cite[Theorem 1]{GO}, we obtain
\begin{align*}
&\|(-\Delta)^{\frac{s}{2}} m\|_{L^q(\R^{2n})}
\leq \sum_{k=l}^\infty \|(-\Delta)^{\frac{s}{2}} [\phi(2^{-k}\cdot) m]\|_{L^q(\R^{2n})}\\
&= \sum_{k=l}^\infty \|(-\Delta)^{\frac{s}{2}} [\Phi(2^{-k} \cdot)\phi(2^{-k}\cdot) m]\|_{L^q(\R^{2n})}\\
&\lesssim \sum_{k=l}^\infty \|(-\Delta)^{\frac{s}{2}} [\phi(2^{-k}\cdot) m]\|_{L^4(\R^{2n})} \|\Phi(2^{-k}\cdot)\|_{L^{\frac{4q}{4-q}}(\R^{2n})}\\
&+\sum_{k=l}^\infty \|\phi(2^{-k}\cdot) m\|_{L^{\frac{4q}{4-q}}(\R^{2n})} \|(-\Delta)^{\frac{s}{2}} [\Phi(2^{-k}\cdot)]\|_{L^4(\R^{2n})}\\
&\lesssim \sum_{k=l}^\infty 2^{-k(s-\frac{2n}{q})} \|(-\Delta)^{\frac{s}{2}} [m(2^k \cdot) \phi]\|_{L^4(\R^{2n})} \|\Phi\|_{L^{\frac{4q}{4-q}}(\R^{2n})}\\
&+ \sum_{k=l}^\infty 2^{-k(s-\frac{2n}{q})} \|m(2^k \cdot) \phi\|_{L^{\frac{4q}{4-q}}(\R^{2n})} \|(-\Delta)^{\frac{s}{2}} \Phi\|_{L^4(\R^{2n})}\\
&\lesssim \sup_{k\in \Z} \|(I-\Delta)^{\frac{s}{2}}[m(2^k \cdot) \phi]\|_{L^4(\R^{2n})} +\sup_{k\in \Z} \|m(2^k \cdot)\phi\|_{L^\infty(\R^{2n})}\\
&\lesssim \sup_{k\in \Z} \|(I-\Delta)^{\frac{s}{2}}[m(2^k \cdot) \phi]\|_{L^4(\R^{2n})},
\end{align*}
where the last inequality follows from the fact that the Sobolev space $L^{4}_{s}(\R^{2n})$ is continuously embedded into $L^{\infty}(\R^{2n})$. 
\end{proof}

\section{Acknowledgments}

I would like to thank Akihiko Miyachi for bringing the papers~\cite{KMT} and \cite{KMT2} to my attention. I am also grateful to the referees for their useful comments that helped to improve the paper. 

This research was partially supported by the Hausdorff Center for Mathematics (DFG EXC 2047).


\end{document}